\newtheorem{theorem}{Theorem}[section]
\newtheorem{corollary}[theorem]{Corollary}
\newtheorem{lemma}[theorem]{Lemma}
\newtheorem{proposition}[theorem]{Proposition}
\theoremstyle{definition}
\newtheorem{definition}[theorem]{Definition}
\newtheorem{remark}[theorem]{Remark}
\newtheorem{example}[theorem]{Example}
\theoremstyle{remark}
\renewcommand{\theclaim}{\textup{\theclaim}}
\newtheorem*{acknowledgements}{Acknowledgements}
\numberwithin{equation}{section}
\def\openone
\newbox\ipbox
\newcommand{\ip}[2]{\left\langle #1\, , \,#2\right\rangle}
\newcommand{\diracb}[1]{\left\langle #1\mathrel{\mathchoice

{\setbox\ipbox=\hbox{$\displaystyle \left\langle\mathstrut
#1\right.$}

\vrule height\ht\ipbox width0.25pt depth\dp\ipbox}

{\setbox\ipbox=\hbox{$\textstyle \left\langle\mathstrut
#1\right.$}

\vrule height\ht\ipbox width0.25pt depth\dp\ipbox}

{\setbox\ipbox=\hbox{$\scriptstyle \left\langle\mathstrut
#1\right.$}

\vrule height\ht\ipbox width0.25pt depth\dp\ipbox}

{\setbox\ipbox=\hbox{$\scriptscriptstyle \left\langle\mathstrut
#1\right.$}

\vrule height\ht\ipbox width0.25pt depth\dp\ipbox}

}\right. }
\newcommand{\dirack}[1]{\left. \mathrel{\mathchoice

{\setbox\ipbox=\hbox{$\displaystyle \left.\mathstrut
#1\right\rangle$}

\vrule height\ht\ipbox width0.25pt depth\dp\ipbox}

{\setbox\ipbox=\hbox{$\textstyle \left.\mathstrut
#1\right\rangle$}

\vrule height\ht\ipbox width0.25pt depth\dp\ipbox}

{\setbox\ipbox=\hbox{$\scriptstyle \left.\mathstrut
#1\right\rangle$}

\vrule height\ht\ipbox width0.25pt depth\dp\ipbox}

{\setbox\ipbox=\hbox{$\scriptscriptstyle \left.\mathstrut
#1\right\rangle$}

\vrule height\ht\ipbox width0.25pt depth\dp\ipbox}

} #1\right\rangle}
\newcommand{\cj}[1]{\overline{#1}}
\newcommand{\bz}{\mathbb{Z}}
\newcommand{\M}{\mathcal{M}}
\newcommand{\B}{\mathcal{B}}
\newcommand{\bn}{\mathbb{N}}
\def\blfootnote{\xdef\@thefnmark{}\@footnotetext}
\newcommand{\End}{\operatorname*{End}}
\newcommand{\Span}{\overline{\operatorname*{span}}}
\def\C{\mathcal{C}}
\def\H{\mathcal{H}}
\def\-{^{-1}}
\def\B{\mathcal{B}}
\def\m{\mathfrak m}
\def\O{\mathcal{O}}
\def\K{\mathcal{K}}
\def\L{\mathcal L}
\begin{document}

\title[Cuntz algebras and Markov measures]{Monic representations of the Cuntz algebra and Markov measures}
\author{Dorin Ervin Dutkay}

\address{[Dorin Ervin Dutkay] University of Central Florida\\
	Department of Mathematics\\
	4000 Central Florida Blvd.\\
	P.O. Box 161364\\
	Orlando, FL 32816-1364\\
U.S.A.\\} \email{Dorin.Dutkay@ucf.edu}

%

\author{Palle E.T. Jorgensen}
\address{[Palle E.T. Jorgensen]University of Iowa\\
Department of Mathematics\\
14 MacLean Hall\\
Iowa City, IA 52242-1419\\}\email{palle-jorgensen@uiowa.edu}

\thanks{} 
\subjclass[2010]{47B32, 47A67, 47D07, 43A45, 42C40, 65T60, 60J27.}
\keywords{Representations in Hilbert space, wavelet representation, sigma-Hilbert space, spectral theory, harmonic analysis, absolute continuity vs singular, dichotomy, infinite product measures, Markov measures, $C^*$-algebra, Cuntz algebras, universal representation.}

\begin{abstract}
   We study representations of the Cuntz algebras $\O_N$. While, for fixed $N$,  the set of equivalence classes of representations of $\O_N$ is known not to have a Borel cross section, there are various subclasses of representations which can be classified. We study monic representations of $\O_N$, that have a cyclic vector for the canonical abelian subalgebra.  We show that $\O_N$ has a certain universal representation  which contains all positive monic representations. A large class of examples of monic representations is based on Markov measures. We classify them and as a consequence we obtain that different parameters yield mutually singular Markov measure, extending the classical result of Kakutani. The monic representations based on the Kakutani measures are exactly the ones that have a one-dimensional cyclic $S_i^*$-invariant space.    
\end{abstract}
\maketitle \tableofcontents

\section{Introduction}

 The Cuntz algebra $\O_N$ is  indexed by an integer $N > 1$, where $N$ is the number of generators. As a $C^*$-algebra (denoted $\O_N$), it is defined by its generators and relations (the Cuntz-relations), and $\O_N$ is known to be a simple, purely infinite $C^*$-algebra, \cite{Cu77}. Further its $K$-groups are known. But its irreducible representations are highly subtle. To appreciate the importance of the study of representations of $\O_N$, recall that to specify a representation of $\O_N$ amounts to identifying a system of isometries in a Hilbert space $\H$, with mutually orthogonal ranges, and adding up to $\H$. But such orthogonal splitting in Hilbert space may be continued iteratively, and as a result, one gets links between the study of $\O_N$-representation on the one hand, to such neighboring areas as symbolic dynamics and to filters used in signal processing, corresponding to a system of $N$ uncorrelated frequency bands.
 
Returning to the subtleties of the representations of $\O_N$, and their equivalence classes, it is known that, for fixed $N$, that the set of equivalence classes of irreducible representations  of $\O_N$,  does not admit a Borel cross section; i.e., the equivalence classes, under unitary equivalence, does not admit a parameterization in the measurable Borel category. (Intuitively, they defy classification). Nonetheless, special families of inequivalent representations have been found, and they have a multitude of applications, both to mathematical physics \cite{BrJo02}, to the study of wavelets \cite{DuJo08a, DuJo07a, Jor06, Jor01}, to harmonic analysis \cite{Str89, DuJo9a, DuJo07b}, to the study of fractals as iterated function systems \cite{DuJo06a, DuJo11a}; and to the study of $\End(B(\H))$ (= endomorphisms)  where $\H$ is a fixed Hilbert space.  Hence it is of interest to identify both discrete and continuous series of representations of $\O_N$, as they arise in such applications.

 From Definition \ref{def2.0}, it is evident that the problem of finding representations of $\O_N$, in a Hilbert space, and their properties, is a rather abstract one, and daunting. Unless the problem is first pared down and structured, there is little one can do in the way of finding and classify $\O_N$-representations. There is even a theorem of Glimm \cite{Gli60, Gli61} to the effect all representations do not admit a Borel labeling; more precisely the set of equivalence classes of representations of $\O_N$ do not have a Borel cross section. Nonetheless the representations of $\O_N$ have a host of applications (e.g., wavelets, fractals, signal processing, quantum measurement and information theory \cite{BrJo02, DJ06, Jor06, Nel69}.)

     A more realistic approach is instead to analyze specific families of representations of $\O_N$.  Our present approach is two-fold: (i) we build a measure space $(\K_N, \B,  \mu)$, where $\K_N$ is a compact Hausdorff space, $\B$ is the Borel-sigma algebra,  and  $\mu$  a probability measure on $(\K_N, \B)$. We take $\K_N$ to be the symbol space consisting of the set of infinite words in the alphabet $\bz_N$, where $\bz_N$ is the cyclic group of order $N$. Equivalently $\K = \K_N$ is the infinite Cartesian product with $\bz_N$ on each factor. For Hilbert space $\H$  we then take $L^2(\mu)$ ($= L^2(\K_N, \B, \mu)$).

       But to get representations of $\O_N$, (ii) we must then first identify a system of isometries $S_i$, $i \in \bz_N$, satisfying the Cuntz relations; see Definition \ref{def2.0}. And the interplay between endomorphisms of $\K_N$ on the one hand, and the associated measures $\mu$ on the other, places strong restrictions of the admissible measures which must first be understood. To this end we turn the question into a problem in symbolic dynamics:  we wish to realize the respective shifts in the symbol space  $\K_N$:  there is one shift to the left $\sigma$, and a system  of of $N$ endomorphisms  $\sigma _i$, shifting to the right. For fixed $i$, $\sigma _i$ is shifting a symbol string to the right, and filling in the letter $i$ at the first slot; see Definition \ref{defcantor}. Now to get a representation of $\O_N$ from this, the measure  $\mu$   which is used must have a number of delicate properties, for example each of the  $N + 1$  shifts applied to  $\mu$ must be relatively absolutely continuous with respect to  $\mu$ itself, i.e., quasi-invariance. Now shift to the left $\sigma$ is only an endomorphism in $\K_N$, and so far the quasi-invariance properties needed for turning the shift mappings into a system of isometries   $(S_i)_{i\in\bz_N}$ in $L^2(\mu)$ satisfying Definition \ref{def2.0} is not well understood. We give in Theorem \ref{th2.5}  an explicit characterization of these measures $\mu$, along with their respective Radon-Nikodym derivatives. We say that these representations are monic because we only need one measure to describe them. (Contrast this with our universal representations in section 4 below.)

The paper is structured as follows:  in section 2, we study {\it monic} representations of the Cuntz algebra $\O_N$ (Definition \ref{def2.4}). We classify them in Theorem \ref{th2.5} and Theorem \ref{th2.9}, and this involves a certain {\it monic system} (Definition \ref{def2.7}) which consists of a measure which are quasi-invariant under shift maps, and some functions which are, up to a phase factor, the roots of the Radon-Nikodym derivatives. The monic representations are called nonnegative if these functions are nonnegative. We prove in Theorem \ref{tha2.10} that two such nonnegative representations are disjoint if and only the associated measures are mutually singular.

In section 3 we present two classes of examples of monic representations: one comes from Markov measures and the other from atomic representations. In Theorem \ref{th3.13}, we prove that different parameters yield disjoint representations and consequently, the Markov measures are mutually singular. We show in Example \ref{ex3.15} that the representations of $\O_N$ that have a one dimensional cyclic $S_i^*$-invariant state are exactly those that are obtained from a monic system with the Kakutani measures \cite{Kak48}.

In section 4, we show how Nelson's universal representation (of an abelian algebra) \cite{Nel69} carries a representation of the Cuntz algebra which is also universal in the sense that it contains all nonnegative monic representation.  

\section{Symbolic dynamics and monic representations}

  Note that our compact infinite product $\K_N$ used below, is a compactification of the $N$-ary tree. The latter, in turn, is a special graph, falling within the graphs called Bratteli diagrams. The Bratteli diagrams in turn serve as useful models, and have a host of applications in symbolic dynamics; see e.g., \cite{Mat11, Ho00}. In fact there is a substantial literature on dynamics in Bratteli diagrams; see e.g., \cite{FrOr13, Kar12, BeKa11, HaYu11}.  (The original paper on Bratteli diagrams is \cite{Bra72}).

But we note that, of the cases in the literature, the question of which systems support a representation of one of the Cuntz algebras has received relatively little attention; see however \cite{BJO04, BJKH01}. It is of interest to find these representations, when they are supported by a symbolic dynamics model. One reason is that when we have an  $\O_N$-representation, the tables can be turned, and we will be able to draw conclusions about the dynamical system from our harmonic analysis of these $\O_N$-representations.

       The cross-road of representations of $C^*$-algebras on the one hand, and dynamics on the other is also of interest for a class of $C^*$-algebras containing the Cuntz algebras, the Cuntz-Krieger algebras  $\O_A$ , and related graph-algebras (\cite{CuKr80, MaPa11, KMST10, BoPe11}; but also here, there has been relatively little activity on determining specific classes of representations of these $\O_A$ and graph-algebras. This is perhaps understandable since, as noted above, already the harmonic analysis for representations of $\O_N$ alone is unwieldy.   

\begin{definition}\label{def2.0}
Let $N\geq 2$. The Cuntz algebra $\O_N$ is the $C^*$-algebra generated by a system of $N$ isometries $(S_i)_{i\in\bz_N}$ satisfying the {\it Cuntz relations}
\begin{equation}
S_i^*S_j=\delta_{ij}I,\quad (i,j\in\bz_N),\quad \sum_{i\in\bz_N}S_iS_i^*=I.
\label{eq2.0.1}
\end{equation}
\end{definition}
\begin{definition}\label{def0.1}
Fix an integer $N\geq 2$. Let $(S_i)_{i\in\bz_N}$ be a representation of the Cuntz algebra $\O_N$ on a Hilbert space $\H$. Let $\bz_N:=\{0,1,\dots,N-1\}$. We will call elements in $\bz_N^k$ {\it words of length $k$}. 
We denote by $\K=\K_N=\bz_N^{\bn}$, the set of all infinite words.
Given two finite words $\alpha=\alpha_1\dots\alpha_n$, $ \beta=\beta_1\dots\beta_m$, we denote by $\alpha\beta$ the concatenation of the two words, so $\alpha\beta=\alpha_1\dots\alpha_n\beta_1\dots\beta_m$. Similarly, for the case when $\beta$ is infinite. Given a word $\omega=\omega_1\omega_2\dots$, and $k$ a non-negative integer smaller than its length, we denote by 
$$\omega|k:=\omega_1\dots\omega_k,$$
the truncated word.

For a finite word $I=i_1\dots i_n$, we denote by 
$$S_I:=S_{i_1}\dots S_{i_n}.$$

We define $\mathfrak A_N$ to be the abelian subalgebra of $\O_N$ generated by $S_IS_I^*$, for all finite words $I$. 
As a $C^*$-algebra, $\mathfrak A_N$ is naturally isomorphic to $C(\K_N)$, the continuous functions on the Cantor group $\K_N$, see Definition \ref{defcantor}.

We say that a subspace $M$ is {\it $S_i^*$-invariant} if $S_i^*M\subset M$ for all $i\in\bz_N$. Equivalently $$P_MS_i^*P_M=S_i^*P_M,$$ where $P_M$ is the projection onto $M$. We say that $M$ is {\it cyclic } for the representation if 
$$\Span\{S_IS_J^*v : v\in M, I,J\mbox{ finite words }\}=\H.$$

\end{definition}

\begin{definition}\label{defcantor}
Fix an integer $N\geq 2$.  The {\it Cantor group on $N$ letters} is 
$$\K=\K_N=\prod_{1}^\infty\bz_N=\{(\omega_1\omega_2\dots) : \omega_i\in \bz_N\mbox{ for all }i=1,\dots\},$$
an infinite Cartesian product.

The elements of $\K_N$ are infinite words.
On the Cantor group, we consider the product topology. We denote by $\B(\K_N)$ the sigma-algebra of Borel subsets of $\K_N$. We denote by $\M(\K_N)$ the set of all finite Borel measures on $\K_N$.

Denote by $\sigma$ the shift on $\K_N$, $\sigma(\omega_1\omega_2\dots)=\omega_2\omega_3\dots$. Define the inverse branches of $\sigma$: for $i\in\bz_N$, $\sigma_i(\omega_1\omega_2\dots)=i\omega_1\omega_2\dots$.

For a finite word $I=i_1\dots i_k\in \bz_N^k$, we define the corresponding {\it cylinder set }

\begin{equation}
\C(I)=\{\omega\in \K_N : \omega_1=i_1,\dots,\omega_k=i_k\}=\sigma_{i_1}\dots\sigma_{i_n}(\K_N).
\label{eqcantor0}
\end{equation}
\end{definition}

\begin{definition}\label{def2.3}
Let $(S_i)_{i\in\bz_N}$ be a representation of the Cuntz algebra $\O_N$ on a Hilbert space $\H$. Then we define the projection value $P$ on the Borel sigma-algebra $\B(\K_N)$ by defining it on cylinders first
\begin{equation}
P(\C(I))=S_IS_I^*\mbox{ for any finite word }I.
\label{eq2.3.1}
\end{equation}
and then extending it by the usual Kolmogorov procedure (see \cite{DHJ13} for details). We call this, the {\it projection valued measure associated to the representation}. 
This projection valued measure then induces a representation $\pi$ of bounded (and of continuous) functions on $\K_N$, by setting
\begin{equation}
\pi(f)=\int_{\K_N} f(\omega)\,dP(\omega).
\label{eq2.3.2}
\end{equation}

For every $x\in \H$, define the Borel measure $\m_x$ on $\K_N$ by
\begin{equation}
\m_x(A)=\ip{x}{P(A)x},\quad(A\in\B(\K_N)).
\label{eq2.3.3}
\end{equation}
Using \eqref{eq2.3.2} and \eqref{eq2.3.3}, and the property
$$P(A\cap B)=P(A)P(B),\quad(A,B\in\B(\K_N)),$$
one obtains
\begin{equation}
\left\|\pi(f)x\right\|_\H^2=\int|f|^2\,d\m_x.
\label{eq2.3.4}
\end{equation}

We will also use the notations
$$P(I)=P(\C(I))\mbox{ for }I=i_1\dots i_n\mbox{ and $P(\omega)=P(\{\omega\})$ for $\omega\in\K_N$}.$$
\end{definition}

\begin{definition}\label{def2.4}
We say that a representation of the Cuntz algebra $\O_N$ on a Hilbert space $\H$ is {\it monic} if there is a cyclic vector $\varphi$ in $\H$ for the abelian subalgebra $\mathfrak A_N$, i.e., 
$$\Span\{S_IS_I^*\varphi : I\mbox{ finite  word }\}=\H.$$
\end{definition}

\begin{definition}\label{def2.7}
A {\it monic system} is a pair $(\mu,(f_i)_{i\in\bz_N})$ where $\mu$ is a finite Borel measure on $\K_N$ and $f_i$ are some functions on $\K_N$ such that $\mu\circ\sigma_i^{-1}\ll\mu$ for all $i\in\bz_N$ and 
\begin{equation}
\frac{d(\mu\circ\sigma_i^{-1})}{d\mu}=|f_i|^2,
\label{eq2.5.1}
\end{equation}
for some functions $f_i\in L^2(\mu)$ with the property that 
\begin{equation}
f_i(x)\neq 0\mbox{ for $\mu$-a.e. $x$ in $\sigma_i(\K_N)$}.
\label{eq2.5.2}
\end{equation}

We say that a monic system is {\it nonnegative} if $f_i\geq0$ for all $i\in\bz_N$.

The representation of $\O_N$ associated to a monic system is 
\begin{equation}
S_if=f_i(f\circ\sigma),\quad(i\in\bz_N,f\in L^2(\mu)).
\label{eq2.5.3} 
\end{equation}

We say that this representation $(S_i)_{i\in\bz_N}$ of the Cuntz algebra is nonnegative if the monic system is. 
\end{definition}

\begin{theorem}\label{th2.5}
Let $(S_i)_{i\in\bz_N}$ be a representation of $\O_N$. The representation is monic if and only if it is unitarily equivalent to a representation associated to a monic system.

\end{theorem}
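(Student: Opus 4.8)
The plan is to prove both implications of the equivalence.

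For the easier direction, I would start by assuming we are given a monic system $(\mu, (f_i)_{i\in\bz_N})$ and verify that the operators $S_i$ defined by \eqref{eq2.5.3} actually form a monic representation of $\O_N$ on $L^2(\mu)$. First I would check the Cuntz relations \eqref{eq2.0.1}. For each $i$, one computes that $S_i$ is an isometry using the Radon--Nikodym condition \eqref{eq2.5.1}: for $f\in L^2(\mu)$,
\begin{equation}
\|S_if\|^2=\int|f_i|^2|f\circ\sigma|^2\,d\mu=\int|f\circ\sigma|^2\,d(\mu\circ\sigma_i^{-1})=\int|f|^2\,d\mu,
\label{eqplan1}
\end{equation}
where the last step uses $\sigma\circ\sigma_i=\mathrm{id}$ and the change of variables for the pushforward measure $\mu\circ\sigma_i^{-1}$, which is supported on $\sigma_i(\K_N)=\C(i)$. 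The orthogonality $S_i^*S_j=\delta_{ij}I$ and completeness $\sum_i S_iS_i^*=I$ follow by computing $S_i^*$ explicitly and using that the cylinder sets $\C(i)$ partition $\K_N$; the nonvanishing condition \eqref{eq2.5.2} is exactly what is needed so that $S_i^*$ is well-defined (one divides by $f_i$ on $\C(i)$). Having established the representation, I would observe that the constant function $\varphi=\openone$ is a cyclic vector for $\mathfrak A_N$: indeed $S_IS_I^*\openone$ is (up to the a.e.\ nonvanishing factor) a scalar multiple of the indicator $\chi_{\C(I)}$, and the span of indicators of cylinder sets is dense in $L^2(\mu)$, giving the monic property of Definition \ref{def2.4}.

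For the converse, which I expect is the substantive direction, I would start from an abstract monic representation $(S_i)$ on $\H$ with cyclic vector $\varphi$ for $\mathfrak A_N$, and manufacture the measure and functions. The measure is forced: set $\mu:=\m_\varphi$ as in \eqref{eq2.3.3}, a finite Borel measure on $\K_N$. Using the isomorphism $\mathfrak A_N\cong C(\K_N)$ and the representation $\pi$ of \eqref{eq2.3.2}, the cyclicity of $\varphi$ for $\mathfrak A_N$ translates into cyclicity of $\varphi$ for $\pi(C(\K_N))$, so by the standard GNS/spectral-multiplicity argument the map $\pi(f)\varphi\mapsto f$ extends to a unitary $W:\H\to L^2(\mu)$ intertwining $\pi$ with multiplication, with $W\varphi=\openone$ and $WS_IS_I^*\varphi=\chi_{\C(I)}$. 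The key step is then to transport the isometries $S_i$ through $W$ and identify the resulting operators $\widetilde S_i:=WS_iW^{-1}$ on $L^2(\mu)$. I would compute the action of $\widetilde S_i$ on a dense set of functions (indicators of cylinders, or $\pi(g)\varphi$) and show it has the multiplicative form $\widetilde S_i f=f_i\cdot(f\circ\sigma)$ for a suitable $f_i$. The intertwining relation $S_i\pi(g)=\pi(g\circ\sigma)S_i$ (which encodes how the isometries shift the projection-valued measure, via $S_iP(A)S_i^*=P(\sigma_i(A))$ and $S_i^*P(A)S_i=P(\sigma^{-1}... )$-type covariance) is what produces the composition with $\sigma$.

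To pin down $f_i$, I would set $f_i:=\widetilde S_i\openone=WS_i\varphi$ and then verify that it satisfies the monic-system axioms. The isometry property of $S_i$, pushed through $W$ and applied as in \eqref{eqplan1}, forces $\int|f_i|^2|h|^2\,d\mu=\int|h|^2\,d(\mu\circ\sigma_i^{-1})$ for all $h$, which is precisely the statement that $|f_i|^2$ is the Radon--Nikodym derivative $d(\mu\circ\sigma_i^{-1})/d\mu$, yielding both $\mu\circ\sigma_i^{-1}\ll\mu$ and \eqref{eq2.5.1}. The nonvanishing condition \eqref{eq2.5.2} I would extract from the Cuntz relation $S_i^*S_i=I$: since $\sigma\circ\sigma_i=\mathrm{id}$ and $\sigma_i(\K_N)=\C(i)$ has full measure under $\mu\circ\sigma_i^{-1}$, the derivative $|f_i|^2$ must be strictly positive $\mu$-a.e.\ on $\C(i)$, for otherwise $S_i$ would fail to be norm-preserving on functions supported there. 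The main obstacle I anticipate is the careful bookkeeping in the covariance/intertwining computation --- verifying that the transported operators genuinely take the form \eqref{eq2.5.3} rather than merely resembling it, and handling the measure-theoretic subtleties (sets of measure zero, the precise sense in which $f\circ\sigma$ is defined $\mu$-a.e.) so that $W$ is a genuine unitary equivalence of representations and not just an isometry on a dense subspace.
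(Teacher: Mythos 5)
Your overall route is the same as the paper's: in the forward direction you verify the Cuntz relations for \eqref{eq2.5.3} and observe that $S_IS_I^*\openone=\chi_{\C(I)}$ spans a dense subspace; in the converse you set $\mu=\m_\varphi$, build the unitary $W$ from cyclicity of $\varphi$ for $\mathfrak A_N$, transport the isometries, put $f_i=WS_i\varphi$, and use the covariance $S_i\pi(g)=\pi(g\circ\sigma)S_i$ together with the isometry computation to obtain \eqref{eq2.5.1}. However, your derivation of the nonvanishing condition \eqref{eq2.5.2} contains a genuine error. You claim it follows from $S_i^*S_i=I$, ``for otherwise $S_i$ would fail to be norm-preserving on functions supported there.'' That implication is false: once $|f_i|^2=d(\mu\circ\sigma_i^{-1})/d\mu$, the operator $f\mapsto f_i(f\circ\sigma)$ is automatically isometric no matter how large the set $E=\{x\in\sigma_i(\K_N):f_i(x)=0\}$ is, because the pushforward measure itself does not charge $E$ (indeed $\mu\circ\sigma_i^{-1}(E)=\int_E|f_i|^2\,d\mu=0$), so $E$ is invisible in the computation $\|S_if\|^2=\int|f\circ\sigma|^2\,d(\mu\circ\sigma_i^{-1})=\int|f|^2\,d\mu$. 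Equivalently, using \eqref{eq2.5.4} one gets $S_i^*S_if=(\chi_{\{f_i\neq0\}}\circ\sigma_i)f$, and $\{f_i\neq0\}$ has full $\mu\circ\sigma_i^{-1}$-measure automatically, so $S_i^*S_i=I$ holds with no constraint whatsoever on $E$; it can never yield \eqref{eq2.5.2}, which is the strictly stronger statement $\mu|_{\sigma_i(\K_N)}\ll\mu\circ\sigma_i^{-1}$.

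What actually forces \eqref{eq2.5.2} --- and what the paper invokes --- is the other Cuntz relation, $\sum_{i\in\bz_N}S_iS_i^*=I$. From \eqref{eq2.5.4} one computes that $S_iS_i^*$ is multiplication by $\chi_{\{f_i\neq0\}}$; since $\{f_i\neq0\}\subset\sigma_i(\K_N)=\C(i)$ and the cylinders $\C(i)$, $i\in\bz_N$, partition $\K_N$, the sum $\sum_i\chi_{\{f_i\neq0\}}$ equals $1$ $\mu$-a.e.\ only if $\{f_i\neq0\}=\C(i)$ up to $\mu$-null sets. The repair to your argument is immediate, since the transported operators $\tilde S_i=WS_iW^{-1}$ satisfy all the Cuntz relations, so you may simply appeal to completeness instead of isometry --- but the step as you wrote it fails. (The same misattribution appears, harmlessly, in your forward direction, where you say \eqref{eq2.5.2} is ``exactly what is needed so that $S_i^*$ is well-defined'': the adjoint exists regardless, with $g_i:=f_i/|f_i|^2$ extended by zero where $f_i=0$; there too, \eqref{eq2.5.2} is needed precisely to make $\sum_iS_iS_i^*=I$ hold.)
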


\begin{proof}
We check that the operators in \eqref{eq2.5.3} define a representation of $\O_N$. 
$$\|S_if\|^2=\int |f_i|^2|f\circ\sigma|^2\,d\mu=\int|f\circ\sigma|^2\,d(\mu\circ\sigma_i^{-1})=\int|f\circ\sigma\circ\sigma_i|^2\,d\mu=\int|f|^2\,d\mu.$$
From \eqref{eq2.5.1}, we have that $f_i$ is supported on $\sigma_i(\K_N)$, and from \eqref{eq2.5.2}, we get that the support of $f_i$ is exactly $\sigma_i(\K_N)$. 

Then 

$$\ip{S_if}{S_jg}=\int \cj f_if_j\cj{(f\circ\sigma)}(g\circ\sigma)\,d\mu=0\mbox{ for $i\neq j$}.$$

We compute $S_i^*$. Define $g_i=\frac{f_i}{|f_i|^2}$ if $f_i(x)\neq 0$, $g_i(x)=0$ if $f_i(x)=0$. Then
$$\ip{S_i^*f}{g}=\int{\cj f}{S_ig}\,d\mu=\int\cj ff_i(g\circ\sigma)\,d\mu=\int \cj f(g\circ\sigma)g_i|f_i|^2\,d\mu=\int (\cj f\circ\sigma_i)(g_i\circ\sigma_i)g\,d\mu$$
so 
\begin{equation}
S_i^*f=(\cj g_i\circ\sigma_i)(f\circ\sigma_i).
\label{eq2.5.4}
\end{equation}
Then
$$\sum_{i\in\bz_N}S_iS_i^*f=\sum_{i\in\bz_N}f_i(\cj g_i\circ\sigma_i\circ\sigma)(f\circ\sigma_i\circ\sigma).$$
For $x\in\sigma_i(\K_N)$, $\sigma_i(\sigma(x))=x$ and $\cj g_i(x)=\frac{\cj f_i(x)}{|f_i(x)|^2}$ (by \eqref{eq2.5.2}). Also $f_j(x)=0$ for $j\neq i$. Therefore 
$$\sum_{i\in\bz_N}S_iS_i^*f(x)=f(x).$$

Thus, we have a representation of $\O_N$. We check that the representation is monic. We have 
$$S_iS_i^*f=f_i(g_i\circ\sigma_i\circ\sigma)(f\circ\sigma_i\circ\sigma)=\chi_{\sigma_i(\K_N)}f.$$
By induction

\begin{equation}
S_IS_I^*f=\chi_{\sigma_{i_1}\dots\sigma_{i_n}(\K_N)}f=\chi_{\C(I)}f\mbox{ for $I=i_1\dots i_n$}.
\label{eq2.5.0}
\end{equation}
Then the constant function $f\equiv 1$ is cyclic for $\mathfrak A_N$, so the representation is monic.

For the converse, if the representation is monic, then let $\varphi$ be a cyclic vector for $\mathfrak A_N$. Define the measure $\mu$ on $\K_N$ by $\mu=\mathfrak m_\varphi$,
$$\ip{\varphi}{\pi(f)\varphi}=\int f\,d\mu,\quad (f\in C(\K_N)).$$
The map $W$ from $C(\K_N)$ to $\H$, $Wf=\pi(f)\varphi$ is linear and isometric so it extends to an isometry from $L^2(\mu)$ to $\H$, onto, because the representation is monic.

Define the operators $\tilde S_i:=W^*S_iW$, $i\in\bz_N$. We check that $\tilde S_i$ are given by \eqref{eq2.5.1}. Let $\tilde S_i1=f_i$. We will use the relations
$$S_i^*\pi(f)S_i=\pi(f\circ\sigma_i),\quad S_i\pi(f)=\pi(f\circ\sigma)S_i,$$
which can be checked first on characteristic functions of cylinder sets. 

 We have 
$$\int |f_i|^2f\,d\mu=\ip{\tilde S_i1}{f\tilde S_i1}_{L^2(\mu)}=\ip{S_i\varphi}{\pi(f)S_i\varphi}_{\H}=\ip{\varphi}{S_i^*\pi(f)S_i\varphi}_{\H}$$$$=\ip{\varphi}{\pi(f\circ\sigma_i)\varphi}_\H=\int f\circ\sigma_i\,d\mu.$$

This implies \eqref{eq2.5.1}. 

For $f\in C(\K_N)$,
$$\tilde S_if=W^*S_iWf=W^*S_i\pi(f)\varphi=W^*\pi(f\circ\sigma)S_i\varphi=W^*\pi(f\circ\sigma)WW^*S_iW1=(f\circ\sigma)f_i.$$

So, we have the formula in \eqref{eq2.5.3}. Then we get as above the formula for $S_i^*$ as in \eqref{eq2.5.4} and the Cuntz relation $\sum S_iS_i^*=I$ implies that the support of $f_i$ must be the entire $\sigma_i(\K_N)$. 
\end{proof}

\begin{proposition}\label{pr1.8}
Let $(\mu,(f_i)_{i\in\bz_N})$ be a monic system. Then $\mu\circ\sigma^{-1}\ll\mu$, and
\begin{equation}
\sum_{j\in\bz_N}\frac{\chi_{\sigma_j(\K_N)}}{|f_j\circ\sigma_j|^2}=\frac{d(\mu\circ\sigma^{-1})}{d\mu}.
\label{eq1.8.1}
\end{equation}

\end{proposition}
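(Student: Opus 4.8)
The plan is to compute the pushforward $\mu\circ\sigma^{-1}$ by decomposing $\K_N$ into the cylinders on which $\sigma$ is invertible and performing a change of variables branch by branch, so that the absolute continuity and the formula for the density emerge together. Since $\sigma_j(\K_N)=\C(j)$ and $\K_N=\bigsqcup_{j\in\bz_N}\C(j)$, for any Borel $g\ge 0$ I would split
$$\int_{\K_N}(g\circ\sigma)\,d\mu=\sum_{j\in\bz_N}\int_{\C(j)}(g\circ\sigma)\,d\mu,$$
and treat each summand using that $\sigma$ restricts to a bijection of $\C(j)$ onto $\K_N$ with inverse $\sigma_j$.

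The key per-branch step is to show that for each $j$,
$$\int_{\C(j)}(g\circ\sigma)\,d\mu=\int_{\K_N}\frac{g}{|f_j\circ\sigma_j|^2}\,d\mu.$$
First I would observe that $\mu\circ\sigma_j^{-1}$ is carried by $\C(j)$, so its density $|f_j|^2$ vanishes $\mu$-a.e.\ off $\C(j)$; hence $f_j$ is supported on $\C(j)$ and, by \eqref{eq2.5.2}, is nonzero $\mu$-a.e.\ there. Because $\sigma_j(\K_N)=\C(j)$, this forces $f_j\circ\sigma_j\neq 0$ $\mu$-a.e.: the set where it vanishes is $\sigma_j^{-1}(\{f_j=0\})$, whose $\mu$-measure is $(\mu\circ\sigma_j^{-1})(\{f_j=0\})=\int_{\{f_j=0\}}|f_j|^2\,d\mu=0$. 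Thus $1/|f_j\circ\sigma_j|^2$ is well defined a.e. To obtain the identity I would apply the change-of-variables identity $\int(\psi\circ\sigma_j)\,d\mu=\int\psi\,d(\mu\circ\sigma_j^{-1})=\int\psi\,|f_j|^2\,d\mu$ (the definition of pushforward together with \eqref{eq2.5.1}) to the function $\psi:=\chi_{\C(j)}\,(g\circ\sigma)/|f_j|^2$. On one hand $\int\psi\,|f_j|^2\,d\mu=\int_{\C(j)}(g\circ\sigma)\,d\mu$; on the other hand, using $\sigma\circ\sigma_j=\mathrm{id}$, a direct evaluation gives $\psi\circ\sigma_j=g/|f_j\circ\sigma_j|^2$, which is exactly the claimed per-branch equality.

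Summing over $j\in\bz_N$ then yields
$$\int_{\K_N}(g\circ\sigma)\,d\mu=\int_{\K_N} g\cdot\Big(\sum_{j\in\bz_N}\frac{1}{|f_j\circ\sigma_j|^2}\Big)\,d\mu\qquad(g\ge 0),$$
and taking $g=\chi_A$ reads off both conclusions at once: if $\mu(A)=0$ the right-hand side vanishes, so $\mu\circ\sigma^{-1}\ll\mu$, and the bracketed sum is the Radon–Nikodym derivative of \eqref{eq1.8.1}. (That this weight lies in $L^1(\mu)$ is automatic, since its integral equals $\sum_{j}\mu(\C(j))=\mu(\K_N)<\infty$.) Structurally this is just the transfer-operator identity: the density at a.e.\ point $x$ is the sum, over the $N$ preimages $\sigma_j(x)$ of $x$ under $\sigma$, of the reciprocal Jacobians $1/|f_j(\sigma_j(x))|^2$.

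The main obstacle I anticipate is purely measure-theoretic rather than structural: justifying the division by $|f_j|^2$ in the change of variables, i.e.\ ensuring that the null set $\{f_j=0\}\cap\C(j)$ does not contribute and that $f_j\circ\sigma_j\neq 0$ $\mu$-a.e. This is precisely where \eqref{eq2.5.2} is used, in tandem with the absolute continuity $\mu\circ\sigma_j^{-1}\ll\mu$. Once the a.e.\ invertibility of each branch is secured, the summation over $\bz_N$ and the identification of the density are routine.
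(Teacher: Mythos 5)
Your argument is correct and is essentially the paper's own: both proofs work branch by branch, pulling the integral over $\sigma_j(\K_N)$ back to $\K_N$ via $\sigma_j$, using the change of variables $\int(\psi\circ\sigma_j)\,d\mu=\int\psi\,d(\mu\circ\sigma_j^{-1})$ together with \eqref{eq2.5.1}, and using \eqref{eq2.5.2} to justify the division by $|f_j|^2$; that you test against $g=\chi_A$ while the paper tests against continuous $\varphi$ is immaterial.

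There is, however, one point you should make explicit rather than pass over: the density you actually derive,
\[
\sum_{j\in\bz_N}\frac{1}{|f_j\circ\sigma_j|^2},
\]
is not the same function as the left-hand side of \eqref{eq1.8.1}, which carries the extra factors $\chi_{\sigma_j(\K_N)}$. As your own vanishing-set computation shows, $f_j\circ\sigma_j\neq 0$ holds $\mu$-a.e.\ on all of $\K_N$, not merely on $\sigma_j(\K_N)$, so inserting $\chi_{\sigma_j(\K_N)}$ genuinely changes the sum. Your indicator-free version is the correct one: pulling back under $\sigma_j$ turns $\chi_{\sigma_j(\K_N)}$ into $\chi_{\sigma_j(\K_N)}\circ\sigma_j\equiv 1$, and this is precisely the step at which the first displayed equality of the paper's printed proof is off (the remaining equalities in that chain are fine, so the paper too, in effect, proves the indicator-free identity). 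A sanity check with the Kakutani measure of Example \ref{ex3.15}: there $f_j=p_j^{-1/2}\chi_{\sigma_j(\K_N)}$, so your sum equals $\sum_j p_j=1$, matching $\frac{d(\mu\circ\sigma^{-1})}{d\mu}=1$ (the measure is shift-invariant, Lemma \ref{lem3.11}), whereas the printed left-hand side of \eqref{eq1.8.1} evaluates at $x=x_1x_2\dots$ to $p_{x_1}$, which is not identically $1$. So your proof is sound, but your closing sentence identifying the bracketed sum with \eqref{eq1.8.1} should instead note that \eqref{eq1.8.1} as stated contains a typo and is correct only after the indicators are deleted.
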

\begin{proof}
Let $\varphi$ be a continuous function on $\K_N$. We have
$$\int \varphi\sum_j\frac{\chi_{\sigma_j(\K_N)}}{|f_j\circ\sigma_j|^2}\,d\mu=\int(\varphi\circ\sigma)\sum_j\frac{1}{|f_j|^2}\,d\mu\circ\sigma_j^{-1}=
\int(\varphi\circ\sigma)\sum_j\chi_{\sigma_j(\K_N)}\,d\mu=\int\varphi\circ\sigma\,d\mu$$$$=\int\varphi\,d\mu\circ\sigma^{-1}=\int\varphi\frac{d(\mu\circ\sigma^{-1})}{d\mu}\,d\mu.$$
\end{proof}

\begin{theorem}\label{th2.9}
The representations of $\O_N$ associated to two monic systems $(\mu,(f_i)_{i\in\bz_N})$ and $(\mu',(f_i')_{i\in\bz_N})$ are equivalent if and only if the measures $\mu$ and $\mu'$ are equivalent, i.e. $\mu\ll\mu'$ and $\mu'\ll\mu$, and there exists a function $h$ on $\K_N$ such that 
\begin{equation}
\frac{d\mu'}{d\mu}=|h|^2
\label{eq2.9.1}
\end{equation}
and

\begin{equation}
f_i'=\frac{h\circ\sigma}{h}f_i,\quad(i\in\bz_N)
\label{eq2.9.2}
\end{equation}
\end{theorem}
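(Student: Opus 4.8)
The plan is to exploit the fact, established inside the proof of Theorem \ref{th2.5}, that for a monic representation the abelian subalgebra $\mathfrak A_N$ acts by multiplication: on $L^2(\mu)$ one has $S_IS_I^*f=\chi_{\C(I)}f$, so $\pi(f)$ is multiplication by $f$ and $\mu=\m_1$ is the measure attached to the cyclic vector $1$. Any unitary $U$ implementing an equivalence of the two representations must intertwine the generators $S_i$; since $U$ is unitary it then also intertwines the adjoints $S_i^*$ and all products $S_IS_I^*$, whence $U\pi'(f)=\pi(f)U$ for every $f\in C(\K_N)$. This reduces the whole statement to understanding unitaries $L^2(\mu')\to L^2(\mu)$ that commute with multiplication by continuous functions.

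For the easy direction I would simply exhibit the intertwiner. Given $\mu\sim\mu'$, $d\mu'/d\mu=|h|^2$, and $f_i'=(h\circ\sigma/h)f_i$, define $U\colon L^2(\mu')\to L^2(\mu)$ by $U\psi=h\psi$. Since $|h|^2=d\mu'/d\mu$ the map $U$ is isometric, and since $\mu\sim\mu'$ forces $h\neq0$ a.e., the map $\psi\mapsto\psi/h$ inverts it, so $U$ is unitary. The intertwining $US_i'=S_iU$ then collapses to the pointwise identity $hf_i'=f_i(h\circ\sigma)$, which is exactly \eqref{eq2.9.2}; this is a one-line verification once $U$ is written down, using only the formula \eqref{eq2.5.3}.

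For the converse I would start from a unitary $U\colon L^2(\mu')\to L^2(\mu)$ with $US_i'=S_iU$ and set $h:=U1$. Using $U\pi'(f)=\pi(f)U$ and $\pi'(f)1=f$ for $f\in C(\K_N)$ gives $Uf=fh$ on continuous functions. Unitarity then yields $\int|f|^2\,d\mu'=\int|f|^2|h|^2\,d\mu$ for all $f\in C(\K_N)$, and since both sides are regular Borel measures the Riesz representation theorem upgrades this to $d\mu'=|h|^2\,d\mu$, i.e. $\mu'\ll\mu$ together with \eqref{eq2.9.1}. With the measure identity in hand, the relation $Uf=fh$ extends from $C(\K_N)$ to all of $L^2(\mu')$ by density and continuity, so $U$ is genuinely multiplication by $h$. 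Surjectivity of $U$ forces $h\neq0$ $\mu$-a.e. (otherwise $\chi_{\{h=0\}}$ would miss the range), whence $d\mu'/d\mu=|h|^2>0$ a.e. and therefore $\mu\ll\mu'$, giving $\mu\sim\mu'$. Finally, applying the intertwining relation to the constant function $1$ and using $S_i'1=f_i'$ together with $S_ih=f_i(h\circ\sigma)$ yields $hf_i'=f_i(h\circ\sigma)$, i.e. \eqref{eq2.9.2}.

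The step I expect to be the real work is the identification of $U$ as a multiplication operator, done cleanly despite $h$ being only an $L^2$, possibly unbounded, function: one must check that commuting with $C(\K_N)$ genuinely pins down $U$ as multiplication by $h=U1$ on all of $L^2(\mu')$, not merely on a dense subspace, and that the full measure equivalence (both $\mu\ll\mu'$ and $\mu'\ll\mu$) is extracted correctly from isometry and surjectivity. Everything else is routine algebraic manipulation of \eqref{eq2.5.3} and \eqref{eq2.5.4}.
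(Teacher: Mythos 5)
Your proposal is correct, and its skeleton matches the paper's: reduce to the abelian subalgebra $\mathfrak A_N$, identify the intertwiner as a multiplication operator $M_h$, get \eqref{eq2.9.1} from isometry, get \eqref{eq2.9.2} by applying the intertwining relation to $f\equiv 1$, and prove the converse by exhibiting $W=M_h$ directly. The difference is in how the key step is justified. The paper simply invokes the standard structure theory of cyclic representations of abelian $C^*$-algebras: since the two representations of $\mathfrak A_N$ are equivalent and cyclic, ``the measures are equivalent and $W$ is a multiplication operator $W=M_h$.'' You instead derive this by hand: set $h:=U1$, use $U\pi'(f)=\pi(f)U$ to get $Uf=fh$ on $C(\K_N)$, upgrade the isometry identity to $d\mu'=|h|^2\,d\mu$ via the Riesz representation theorem (giving $\mu'\ll\mu$), extend $Uf=fh$ to all of $L^2(\mu')$ by density, and extract $\mu\ll\mu'$ from surjectivity of $U$ (the point that $\chi_{\{h=0\}}$ would otherwise miss the range). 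What this buys you is self-containedness and, in fact, slightly better accuracy: the paper asserts $h\in L^\infty(\K_N)$, which need not hold, since $d\mu'/d\mu$ can be unbounded; unitarity of $M_h\colon L^2(\mu')\to L^2(\mu)$ requires only $|h|^2\,d\mu = d\mu'$, not boundedness of $h$ --- a point you flag correctly when you note $h$ is a priori only an $L^2$ function. The density-extension step you single out as ``the real work'' is indeed the only place requiring care (one passes to an a.e.\ convergent subsequence and uses $d\mu'=|h|^2 d\mu$ to see the pointwise limit is $fh$), and your outline of it is sound. The paper's route is shorter at the cost of citing abstract multiplicity theory; yours is longer but elementary and fixes the $L^\infty$ slip.
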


\begin{proof}
Suppose the two representations are equivalent and let $W:L^2(\mu')\rightarrow L^2(\mu)$ be the intertwining isometric isomorphism. Then the two representations of the abelian subalgebra $\mathfrak A_N$ are equivalent and have cyclic vectors. This implies that the measures are equivalent, and $W$ is a multiplication operator $W=M_h$, for some $h\in L^\infty(\K_N)$. Since $W$ is an isometry, we get
$$\int |f|^2|h|^2\,d\mu=\int |f|^2\,d\mu',\quad (f\in L^2(\mu')).$$
This implies \eqref{eq2.9.1}. Since $W$ is invertible, or since the measures are equivalent, we also get that $h\neq0$, $\mu$-a.e.. From the intertwining property $S_iW=WS_i'$ we obtain
that, for any $f\in L^2(\mu')$,
$$f_i(h\circ\sigma)(f\circ\sigma)=hf_i'(f\circ\sigma).$$
Take $f\equiv1$ and we obtain \eqref{eq2.9.2}.

For the converse, just take $Wf=hf$ on $L^2(\mu')$, and a simple check shows that $W$ is an intertwining isomorphism. 
\end{proof}

\begin{proposition}\label{pr1.10}
Let $(\mu,(f_i)_{i\in\bz_N})$ and $(\mu',(f_i')_{i\in\bz_N})$ be two nonnegative monic systems. If the measures are equivalent and $\sqrt{\frac{d\mu'}{d\mu}}=h$, then \eqref{eq2.9.2} holds. In particular, the two representations of $\O_N$ are equivalent. 

\end{proposition}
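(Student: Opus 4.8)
The plan is to prove the square-root identity \eqref{eq2.9.2} by first establishing the corresponding identity between the squares and then extracting nonnegative roots. Since the two systems are nonnegative and $h=\sqrt{d\mu'/d\mu}\geq 0$, the functions $f_i$, $f_i'$ and $h$ are all nonnegative, so \eqref{eq2.9.2} will follow at once from the equality
\[
(f_i')^2=\frac{(h\circ\sigma)^2}{h^2}\,f_i^2,
\]
which I aim to prove $\mu$-a.e. (equivalently $\mu'$-a.e., the measures being equivalent). By the defining property \eqref{eq2.5.1} of the two monic systems, $(f_i')^2=\frac{d(\mu'\circ\sigma_i^{-1})}{d\mu'}$ and $f_i^2=\frac{d(\mu\circ\sigma_i^{-1})}{d\mu}$, so the task reduces to expressing the first Radon--Nikodym derivative through the second one and through $h$.

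The main computation is an application of the chain rule along $\mu'\circ\sigma_i^{-1}$, $\mu\circ\sigma_i^{-1}$, $\mu$, $\mu'$; the only nonroutine factor is $\frac{d(\mu'\circ\sigma_i^{-1})}{d(\mu\circ\sigma_i^{-1})}$, which I will identify as $h^2\circ\sigma$. To obtain it I test against a continuous function $\varphi$ and use the pushforward change of variables together with $\frac{d\mu'}{d\mu}=h^2$:
\[
\int\varphi\,d(\mu'\circ\sigma_i^{-1})=\int(\varphi\circ\sigma_i)\,d\mu'=\int(\varphi\circ\sigma_i)\,h^2\,d\mu.
\]
Using the global identity $\sigma\circ\sigma_i=\mathrm{id}_{\K_N}$ to write $h^2=(h^2\circ\sigma)\circ\sigma_i$, the right-hand side becomes $\int\Bigl(\bigl(\varphi\,(h^2\circ\sigma)\bigr)\circ\sigma_i\Bigr)\,d\mu=\int\varphi\,(h^2\circ\sigma)\,d(\mu\circ\sigma_i^{-1})$. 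This gives $\frac{d(\mu'\circ\sigma_i^{-1})}{d(\mu\circ\sigma_i^{-1})}=h^2\circ\sigma$, and multiplying the three chain-rule factors yields $(f_i')^2=(h^2\circ\sigma)\,f_i^2\,h^{-2}$, i.e. the desired square identity. Taking nonnegative square roots then gives \eqref{eq2.9.2}.

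For the final assertion, that the two representations are equivalent, I would simply invoke Theorem \ref{th2.9}: its hypothesis \eqref{eq2.9.1} holds by the very choice $h=\sqrt{d\mu'/d\mu}$, and \eqref{eq2.9.2} has just been verified, so the equivalence criterion of that theorem is satisfied.

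The hard part will not be the algebra but the measure-theoretic bookkeeping with null sets and supports. I must use that $h\neq 0$ $\mu$-a.e. (because $\mu$ and $\mu'$ are equivalent) so that the quotient $(h\circ\sigma)/h$ is defined almost everywhere, and I must keep track of the fact that each $f_i$ is supported on the cylinder $\sigma_i(\K_N)$: off this cylinder both sides of \eqref{eq2.9.2} vanish, while on it the needed compositions ($\sigma\circ\sigma_i=\mathrm{id}$ globally, and $\sigma_i\circ\sigma=\mathrm{id}$ on $\sigma_i(\K_N)$) must be invoked in the correct places. Getting these substitutions right is the only delicate point.
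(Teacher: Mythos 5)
Your proposal is correct and follows essentially the same route as the paper's own proof: the paper likewise establishes $\frac{d(\mu'\circ\sigma_i^{-1})}{d(\mu\circ\sigma_i^{-1})}=h^2\circ\sigma$ by testing against a continuous function and using $\sigma\circ\sigma_i=\mathrm{id}$, then applies the chain rule for Radon--Nikodym derivatives to get $(f_i')^2=(h^2\circ\sigma)f_i^2/h^2$ and takes nonnegative square roots. Your explicit appeal to Theorem \ref{th2.9} for the final equivalence statement is left implicit in the paper but is exactly the intended justification.
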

\begin{proof}
First, we prove that 
\begin{equation}
\frac{d(\mu'\circ\sigma_i^{-1})}{d(\mu\circ\sigma_i^{-1})}=h^2\circ\sigma. 
\label{eq1.10.1}
\end{equation}

Indeed, if $f$ is a continuous function on $\K_N$, then
$$\int f\,d\mu'\circ\sigma_i^{-1}=\int f\circ\sigma_i\,d\mu'=\int (f\circ\sigma_i)h^2\,d\mu=\int (f\circ\sigma_i)(h^2\circ\sigma\circ\sigma_i)\,d\mu=\int f(h^2\circ\sigma)\,d\mu\circ\sigma_i^{-1}.$$
Then, by the chain rule for Radon-Nikodym derivatives, 
$$f_i'^2=\frac{d\mu'\circ\sigma_i^{-1}}{d\mu'}=\left(\frac{d\mu'\circ\sigma_i^{-1}}{d\mu\circ\sigma_i^{-1}}\right)\left(\frac{d\mu\circ\sigma_i^{-1}}{d\mu}\right)\frac{d\mu}{d\mu'}=(h^2\circ\sigma)f_i^2\frac1{h^2}.$$
Then \eqref{eq2.9.2} follows.
\end{proof}

\begin{proposition}\label{pra2.9}
Let $(\mu,(f_i)_{i\in\bz_N})$, $(\mu',(f_i')_{i\in\bz_N})$ be two nonnegative monic systems. Let $d\mu'=h^2\,d\mu+d\nu$ be the Lebesgue-Radon-Nikodym decomposition, with $h\geq0$ and $\nu$ singular with respect to $\mu$. Then there is a partition of $\K_N$ into Borel sets $\K_N=A\cup B$, such that 
\begin{enumerate}
	\item The function $h$ is supported on $A$, $\nu$ supported on $B$, $\mu(B)=0$, $\nu(A)=0$;
	\item The sets $A,B$ are invariant under $\sigma$, i.e., $\sigma^{-1}(A)=A$, $\sigma^{-1}(B)=B$. 
	\item $\nu\circ\sigma_j^{-1}\ll\nu$, and $k_j:=\sqrt{\frac{d(\nu\circ\sigma_j^{-1})}{d\nu}}$ is supported on $B$.
	\item $f_j'h=f_j(h\circ\sigma)$ on $A$ and $f_j'=k_j$ on $B$. 
\end{enumerate}

\end{proposition}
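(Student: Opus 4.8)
The plan is to begin with the Lebesgue--Radon--Nikodym decomposition $d\mu' = h^2\,d\mu + d\nu$ (with $h\ge 0$ and $\nu\perp\mu$) and manufacture from it a genuinely \emph{shift-invariant} carrier of the singular part. Choose any Borel $E$ with $\mu(E)=0$ and $\nu(\K_N\setminus E)=0$. The point is that $E$ can be saturated into an exactly $\sigma$-invariant null set without spoiling either property. Since $\sigma(F)=\bigcup_{i}\sigma_i^{-1}(F)$, and $\mu\circ\sigma_i^{-1}\ll\mu$ for every $i$ together with $\mu\circ\sigma^{-1}\ll\mu$ (Proposition \ref{pr1.8}), all forward images $\sigma^m(E)$ and backward images $\sigma^{-n}(E)$ stay $\mu$-null. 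Hence $B:=\bigcup_{n\ge 0}\sigma^{-n}\big(\bigcup_{m\ge 0}\sigma^m(E)\big)$ is $\mu$-null, contains $E$ (so $\nu(\K_N\setminus B)=0$), and a direct check gives $\sigma^{-1}(B)=B$. Putting $A:=\K_N\setminus B$ and replacing $h$ by $h\,\chi_A$ (harmless since $\mu(B)=0$) secures items (1) and (2).

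Next I would show that pushing the decomposition forward by each branch $\sigma_j$ \emph{respects} the split into absolutely continuous and singular parts relative to $\mu$. The absolutely continuous piece stays so because $\mu\circ\sigma_j^{-1}\ll\mu$. The singular piece $\nu\circ\sigma_j^{-1}$ is carried by $\sigma_j(B)$, and this set is $\mu$-null: as $\sigma_j$ is injective, $\mu\circ\sigma_j^{-1}(\sigma_j(B))=\mu(B)=0$, i.e. $\int_{\sigma_j(B)} f_j^2\,d\mu=0$, and since $f_j\ne 0$ $\mu$-a.e. on $\sigma_j(\K_N)\supseteq\sigma_j(B)$ by \eqref{eq2.5.2}, this forces $\mu(\sigma_j(B))=0$. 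This nonvanishing hypothesis is exactly what prevents singular mass from leaking into the absolutely continuous part, and it is the crucial structural input. Writing $\mu'\circ\sigma_j^{-1}=f_j'^2\mu' = f_j'^2h^2\,d\mu + f_j'^2\,d\nu$ and comparing with the split just obtained, uniqueness of the Lebesgue decomposition of $\mu'\circ\sigma_j^{-1}$ relative to $\mu$ forces
$$(h^2\mu)\circ\sigma_j^{-1}=f_j'^2 h^2\,\mu,\qquad \nu\circ\sigma_j^{-1}=f_j'^2\,\nu .$$
The second identity is precisely item (3): $\nu\circ\sigma_j^{-1}\ll\nu$ with $k_j=\sqrt{f_j'^2}=f_j'$ $\nu$-a.e., and since $\nu$ lives on $B$ we take $k_j$ supported on $B$, which also yields $f_j'=k_j$ on $B$ for item (4).

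It remains to extract $f_j'h = f_j(h\circ\sigma)$ on $A$ from the first identity, and for this I would compute the density of $(h^2\mu)\circ\sigma_j^{-1}$ with respect to $\mu$ directly. Using $\int(\psi\circ\sigma_j)\,d\mu=\int\psi f_j^2\,d\mu$ together with the identity $h^2=\big((h^2\circ\sigma)\chi_{\C(j)}\big)\circ\sigma_j$ (valid since $\sigma\circ\sigma_j=\mathrm{id}$), a short substitution gives
$$\frac{d\big((h^2\mu)\circ\sigma_j^{-1}\big)}{d\mu}=(h^2\circ\sigma)\,f_j^2 ,$$
using that $f_j^2$ is already supported on $\C(j)$. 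Equating this with the density $f_j'^2 h^2$ from the first identity above yields $f_j'^2 h^2=(h^2\circ\sigma)f_j^2$ $\mu$-a.e., and taking nonnegative square roots (all functions $\ge 0$) produces $f_j'h=f_j(h\circ\sigma)$ $\mu$-a.e., i.e. on $A$ since $\mu(B)=0$.

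The main obstacle I anticipate is arranging a set $B$ that is \emph{exactly} $\sigma$-invariant (not merely invariant mod $\mu$) while simultaneously keeping $\mu(B)=0$ and $\nu(A)=0$; the two-sided saturation above is designed precisely to reconcile exact invariance with the two measure constraints, and it succeeds only because quasi-invariance holds for all $N+1$ maps $\sigma,\sigma_0,\dots,\sigma_{N-1}$ at once. The remaining steps (the pushforward density and the square-root extraction) are routine once \eqref{eq2.5.2} is in force.
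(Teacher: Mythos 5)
Your proof is correct, and while your construction of the invariant partition $\K_N=A\cup B$ coincides with the paper's (the paper likewise saturates a $\mu$-null carrier $\tilde B$ of $\nu$ under the maps $\sigma$ and $\sigma_j$, which is exactly your two-sided orbit), your treatment of (iii) and (iv) takes a genuinely different route. The paper proves (iii) by a direct null-set argument: if $\nu(E)=0$ then $\mu'(E\cap B)=0$, and quasi-invariance of $\mu'$ (the fact that $\mu'$ is itself part of a monic system) together with $\sigma_j^{-1}(B)=B$ gives $\nu(\sigma_j^{-1}(E))=0$; it then proves the two identities in (iv) separately, by integrating bounded test functions supported on $A$ and on $B$ against the relation $d(\mu'\circ\sigma_j^{-1})=f_j'^2\,d\mu'$. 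You instead push the entire Lebesgue decomposition forward under $\sigma_j$ and invoke uniqueness of the Lebesgue decomposition of $\mu'\circ\sigma_j^{-1}$ relative to $\mu$, which delivers $\nu\circ\sigma_j^{-1}=f_j'^2\nu$ and $(h^2\mu)\circ\sigma_j^{-1}=f_j'^2h^2\mu$ simultaneously --- i.e., (iii) and both halves of (iv) in one stroke, once you verify the density formula $d\bigl((h^2\mu)\circ\sigma_j^{-1}\bigr)/d\mu=(h^2\circ\sigma)f_j^2$, which is essentially the same computation the paper performs with test functions. What your route buys is economy and a transparent reason why no singular mass can leak into the absolutely continuous part; what the paper's route buys is that it only ever manipulates preimages and quasi-invariance, never forward images. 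One small simplification you missed: since $B$ is exactly $\sigma$-invariant, $\sigma_j(B)=\C(j)\cap\sigma^{-1}(B)=\C(j)\cap B\subseteq B$, so $\mu(\sigma_j(B))=0$ is immediate, and your appeal to injectivity of $\sigma_j$ and to \eqref{eq2.5.2} at that step (which you single out as the crucial structural input) is in fact unnecessary --- the invariance of $B$ already does the job.
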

\begin{proof}
Let $\tilde B$ be a support of $\nu$, such that $\mu(\tilde B)=0$. Since $\mu$ is part of a monic system, from Proposition \ref{pr1.8}, it follows that $\sigma^{-1}(\tilde B)$ and $\sigma_j^{-1}(\tilde B)$ have $\mu$-measure zero. Therefore we can take the orbit $B$ of $\tilde B$ under $\sigma$ and $\sigma_j$ and this will have $\mu$ measure zero. Let $A:=\K_N\setminus B$. Then this is a support for $\mu$ and we can chose $h$ to be supported on $A$ and $\nu(A)=0$. Also $A$ and $B$ are invariant under $\sigma$. 

To prove (iii), let $E$ be a Borel set with $\nu(E)=0$. Then $\nu(E\cap B)=0$ so $\mu'(E\cap B)=0$. Then $\mu'(\sigma_j^{-1}(E\cap B))=0$ which means that $\mu'(\sigma_j^{-1}(E)\cap B)=0$, so 
$\nu(\sigma_j^{-1}(E))=0$. Since $B$ is invariant and $\nu$ and $\nu\circ\sigma_j^{-1}$ are supported on $B$, it follows that $k_j$ is supported on $B$. 

For (iv), let $f$ be a bounded Borel function supported on $A$. Then 
$$\int f_j'^2fh^2\,d\mu=\int f_j'^2f\,d\mu'=\int f\circ\sigma_j\,d\mu'=\int (f\circ\sigma_j)h^2\,d\mu=\int (f\circ\sigma_j)(h^2\circ\sigma\circ\sigma_j)\,d\mu$$$$=\int f(h^2\circ\sigma)\,d\mu\circ\sigma_j^{-1}=\int f(h^2\circ\sigma)f_j\,d\mu$$
This implies the first relation.

For the second, take $f$ supported on $B$ and the result follows from the fact that $\mu'|B=\nu$. 

\end{proof}

\begin{theorem}\label{tha2.10}
Let $(\mu,(f_i)_{i\in\bz_N})$ and $(\mu',(f_i')_{i\in\bz_N})$ be two nonnegative monic systems. Then the two associated representations of $\O_N$ are disjoint if and only if the measures $\mu$ and $\mu'$ are mutually singular. 
\end{theorem}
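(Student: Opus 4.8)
The plan is to use the standard characterization that two representations are disjoint if and only if the only bounded operator intertwining them is $0$, together with the observation that by \eqref{eq2.5.0} the restriction of each monic representation to the abelian subalgebra $\mathfrak A_N$ is simply the multiplication representation. Writing $\pi$ (resp. $\pi'$) for the representation of $C(\K_N)$ induced as in Definition \ref{def2.3}, we have $\pi(g)=M_g$ (multiplication by $g$) on $L^2(\mu)$ and $\pi'(g)=M_g$ on $L^2(\mu')$, with the constant function $1$ cyclic in each case. Thus the two abelian restrictions are cyclic multiplication representations with scalar spectral measures $\mu$ and $\mu'$, and the whole theorem is to be read off from comparing them.

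For the direction $\mu\perp\mu'\Rightarrow$ disjoint, I would take an arbitrary bounded $T:L^2(\mu')\to L^2(\mu)$ with $TS_i'=S_iT$ for all $i$. Then $T$ intertwines the abelian restrictions, $T\pi'(g)=\pi(g)T$ for every continuous $g$; a routine strong-operator limit argument (the functional monotone class theorem, using \eqref{eq2.3.4} to pass bounded pointwise limits through $\pi,\pi'$) upgrades this to $T\pi'(f)=\pi(f)T$ for all bounded Borel $f$. Mutual singularity gives a Borel partition $\K_N=E\cup E'$ with $\mu(E')=0=\mu'(E)$; then $\pi(\chi_E)=I$ while $\pi'(\chi_E)=0$, so $T=\pi(\chi_E)T=T\pi'(\chi_E)=0$. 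Hence no nonzero intertwiner exists and the representations are disjoint.

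For the converse I would argue by contraposition, producing a nonzero intertwiner whenever $\mu,\mu'$ are not mutually singular. Taking the Lebesgue--Radon--Nikodym decomposition $d\mu'=h^2\,d\mu+d\nu$ ($h\ge 0$, $\nu\perp\mu$) and the associated sets $A,B$ from Proposition \ref{pra2.9}, non-singularity means exactly that $h\neq 0$ on a set of positive $\mu$-measure. I then set $T:L^2(\mu')\to L^2(\mu)$, $Tg=hg$. Since $h^2\,d\mu\le d\mu'$, we get $\|Tg\|_{L^2(\mu)}^2=\int|g|^2h^2\,d\mu\le\|g\|_{L^2(\mu')}^2$, so $T$ is a well-defined contraction, and $T1=h\neq0$ shows $T\neq0$. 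Finally $TS_i'g=hf_i'(g\circ\sigma)$ and $S_iTg=f_i(h\circ\sigma)(g\circ\sigma)$ agree $\mu$-a.e., because Proposition \ref{pra2.9}(iv) gives $f_i'h=f_i(h\circ\sigma)$ $\mu$-a.e. on $A$, hence $\mu$-a.e. on $\K_N$ as $\mu(B)=0$. So $T$ is a nonzero intertwiner and the two representations are not disjoint.

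The hard part will be the passage, in the forward direction, from intertwining on continuous functions to intertwining of the full Borel functional calculus of the two projection-valued measures, since only then can the complementary supports $E,E'$ be inserted to annihilate $T$; this is where the classical spectral-multiplicity fact that mutually singular scalar measures yield disjoint multiplication representations is really being used. The converse, by contrast, is essentially mechanical once Proposition \ref{pra2.9} hands us the function $h$; the one point requiring care there is that the intertwining identity is produced only on the $\sigma$-invariant absolutely continuous set $A$ and must be promoted to a genuine $\mu$-a.e. identity on all of $\K_N$, which is legitimate precisely because $\mu(B)=0$.
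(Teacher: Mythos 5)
Your proposal is correct and follows essentially the same route as the paper: the forward direction restricts an intertwiner to the abelian subalgebra $\mathfrak A_N$, where by \eqref{eq2.5.0} both representations act by multiplication operators, and mutual singularity forces the intertwiner to vanish; the converse uses Proposition \ref{pra2.9} and multiplication by $h$ as the nonzero intertwiner. Your operator $Tg=hg$ on all of $L^2(\mu')$ is in fact literally the paper's $W$ (which is defined as multiplication by $h$ on $L^2(A)$ and $0$ on its complement), since $h$ vanishes on $B$; the only other difference is that you spell out the extension to the Borel functional calculus and the partition argument that the paper compresses into one sentence.
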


\begin{proof}
If the measures are mutually singular, let $W:L^2(\mu)\rightarrow L^2(\mu')$ be an intertwining operator. Then $W$ also intertwines the two representations of the abelian subalgebra $\mathfrak A_N$. But since these representations are given by multiplication operators (see \eqref{eq2.5.0}), and the measures are mutually singular, it follows that $W=0$.

For the converse, assume that the representations are disjoint and that the measures are not mutually singular. Then, use Proposition \ref{pra2.9} and decompose $d\mu'=h^2\,d\mu+d\nu$, with the subsets $A,B$ as in Proposition \ref{pra2.9}.

Define the operator $W$ on $L^2(\mu')$ by $Wf=fh$ if $f\in L^2(A)$, and $Wf=0$ on the orthogonal complement of $L^2(A)$. Since $A$ is invariant under $\sigma$, $L^2(A)$ is a reducing subspace for the representation. We check that $W$ is intertwining, using Proposition \ref{pra2.9}(iv):
$$S_jWf=f_j(h\circ\sigma)(f\circ\sigma)=f_j'h(f\circ\sigma)=S_j'Wf.$$

\end{proof}

\begin{theorem}\label{th2.10}
Let $(\mu,(f_i)_{i\in\bz_N})$ be a monic system and let $(S_i)_{i\in\bz_N}$ be the associated representation of $\O_N$. Then the commutant of the representation consists of multiplication operators by functions $h$ with $h\circ\sigma=h$, $\mu$-a.e.. In particular, the representation is irreducible if and only if $\sigma$ is ergodic with respect to the measure $\mu$, i.e., the only Borel set $A$ in $\K_N$ with $\sigma^{-1}(A)=A$ are sets of measure zero, or of full measure.

\end{theorem}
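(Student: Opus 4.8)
The plan is to first force every element of the commutant to be a multiplication operator, and then to read off $\sigma$-invariance from the intertwining relation with the $S_i$.

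First I would use \eqref{eq2.5.0} to reduce to multiplication operators. Restricted to $\mathfrak A_N$, the representation is the multiplication representation: $\pi(\chi_{\C(I)})=M_{\chi_{\C(I)}}$, and more generally $\pi(f)=M_f$. Since the cylinder sets generate the Borel $\sigma$-algebra of $\K_N$, the von Neumann algebra generated by the projections $\{M_{\chi_{\C(I)}}\}$ is all of $L^\infty(\mu)$ acting by multiplication on $L^2(\mu)$, and this algebra is maximal abelian (the constant function $1$ is cyclic and separating for it). Any $T$ in the commutant of the representation commutes in particular with every $\pi(\chi_{\C(I)})$, hence with all of $L^\infty(\mu)$, so by maximal abelianness $T=M_h$ for some $h\in L^\infty(\mu)$.

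Next I would impose commutation with the generators. Using $S_if=f_i(f\circ\sigma)$ from \eqref{eq2.5.3}, the identity $S_iM_h=M_hS_i$ reads $f_i(h\circ\sigma)(f\circ\sigma)=hf_i(f\circ\sigma)$ for all $f$; taking $f\equiv1$ yields $f_i(h\circ\sigma-h)=0$ $\mu$-a.e. Because $f_i\neq0$ $\mu$-a.e.\ on $\sigma_i(\K_N)$ by \eqref{eq2.5.2}, this forces $h\circ\sigma=h$ on $\sigma_i(\K_N)$, and since $\bigcup_{i\in\bz_N}\sigma_i(\K_N)=\K_N$ (the disjoint union of the first-letter cylinders) I conclude $h\circ\sigma=h$ $\mu$-a.e. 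Conversely, I must verify that such an $M_h$ genuinely lies in the commutant, i.e.\ commutes with both $S_i$ and $S_i^*$. Commutation with $S_i$ is immediate from $h\circ\sigma=h$. For $S_i^*$, using \eqref{eq2.5.4}, the point is to propagate $\sigma$-invariance to $h\circ\sigma_i=h$: if $Z=\{h\circ\sigma\neq h\}$ is $\mu$-null, then $\sigma_i^{-1}(Z)$ is $\mu$-null because $\mu\circ\sigma_i^{-1}\ll\mu$, and off this set $h(\sigma_i(x))=h(\sigma(\sigma_i(x)))=h(x)$ since $\sigma\circ\sigma_i=\mathrm{id}$ on $\sigma_i(\K_N)$; this gives $M_hS_i^*=S_i^*M_h$. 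Hence the commutant is exactly $\{M_h:h\in L^\infty(\mu),\ h\circ\sigma=h\ \mu\text{-a.e.}\}$. Finally, irreducibility is equivalent to the commutant being $\bc I$, i.e.\ to the only $\sigma$-invariant functions in $L^\infty(\mu)$ being $\mu$-a.e.\ constant; taking level sets of such $h$ shows this is equivalent to every Borel set $A$ with $\sigma^{-1}(A)=A$ being $\mu$-null or $\mu$-conull, which is ergodicity of $\sigma$.

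The step I expect to be the genuine obstacle is the converse direction, namely showing that $\sigma$-invariance of $h$ also produces commutation with the adjoints $S_i^*$. This is precisely where the quasi-invariance $\mu\circ\sigma_i^{-1}\ll\mu$ built into the monic system is indispensable: without it, $h\circ\sigma=h$ on a $\mu$-conull set need not transfer to $h\circ\sigma_i=h$ on a $\mu$-conull set, and the equality of $M_hS_i^*$ and $S_i^*M_h$ could fail. Everything else is a routine consequence of maximal abelianness and the explicit formulas \eqref{eq2.5.3}, \eqref{eq2.5.4}, \eqref{eq2.5.0} from Theorem \ref{th2.5}.
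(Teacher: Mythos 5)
Your proof is correct and follows essentially the same route as the paper's: use the cyclic vector and \eqref{eq2.5.0} to conclude that the commutant of $\mathfrak A_N$ consists of multiplication operators (maximal abelianness), then take $f\equiv 1$ in the intertwining relation and invoke \eqref{eq2.5.2} to get $h\circ\sigma=h$, with the converse by direct verification. The only difference is that you spell out, via quasi-invariance $\mu\circ\sigma_i^{-1}\ll\mu$, why $\sigma$-invariance of $h$ propagates to commutation with $S_i^*$ — a step the paper dismisses as ``easy to see'' (it also follows formally since the class of invariant $h$ is closed under conjugation, so commutation with all $S_i$ self-improves to commutation with the generated $*$-algebra).
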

  
  \begin{proof}
  Let $T$ be an operator in the commutant. Then $T$ commutes with the representation of the abelian subalgebra $\mathfrak A_N$. Since this has a cyclic vector, it is a maximal abelian subalgebra consisting of multiplication operators (see \eqref{eq2.5.0}). Then $T$ must be a multiplication operator $T=M_h$ with $h\in L^\infty(\mu)$. Since $T$ commutes with $S_i$ we obtain
  $$hf_i(f\circ \sigma)=f_i(h\circ\sigma)(f\circ\sigma),\quad(f\in L^2(\mu)).$$
  Take $f\equiv1$ and use \eqref{eq2.5.2} to conclude that $h\circ\sigma=h$.
  
  Conversely, it is easy to see that any such multiplication operator commutes with the isometries $S_i$. 
  
  \end{proof}
  
  \section{Classes of monic representations}

\subsection{Markov measures}

       We describe here a class of measures which give rise to monic systems and therefore to monic representations of $\O_N$.        
     We suggest Markov processes as a tool, and the corresponding measures will be families of Markov measures.  Given a system of Markov transition probability matrices, the corresponding Markov measure  $\mu$  will then be constructed with the use of Kolmogorov's consistency condition; see e.g., \cite{Pet89, Jor06}.)  Specifically, a Markov process in symbolic dynamics is specified by a system of Markov transition matrices, and from this one then construct associated Markov measures  $\mu$ on $\K_N$ (= the set of infinite paths.) Since monic representations of $\O_N$  are hard to come by, we begin our approach here via Markov processes picking out a rather special system of Markov transition matrices. This will facilitate explicit formulas and avoid some delicate questions regarding infinite products. 
      
      We stress our two sources of motivation; one is the above mentioned list of applications, and the other is two classical papers on infinite products.   
Since Markov measures use both finite and infinite products, a rigorous analysis depends on infinite products, for example Kolmogorov's inductive limit construction. In this connection we have been motivated by two classics,  \cite{vN39} by von Neumann, and \cite{Kak48} by Kakutani. The latter \cite{Kak48}  has the dichotomy theorem for infinite product measures (by Kakutani). But our related use of sigma-measures  in section 4 below is also motivated by \cite{Kak48}. Here we use sigma-measures in our construction of a special representation of $\O_N$ having certain universality properties, Theorem \ref{th2.2}.
 
\begin{definition}\label{def3.8}
A class of Markov measures on $\K_N$ are defined by a vector $\lambda=(\lambda_0,\dots,\lambda_{N-1})$ and an $N\times N$ matrix $T$ such that $\lambda_i>0$, $T_{ij}>0$ for all $i,j\in\bz_N$ and if $e=(1,1\dots,1)^T$ then
\begin{equation}
\lambda T=\lambda\mbox{ and }Te=e.
\label{eq3.8.1}
\end{equation}

  Then there exists a unique Borel measure on $\K_N$ such that
  \begin{equation}
\mu(\C(I))=\lambda_{i_1}T_{i_1,i_2}\dots T_{i_{n-1},i_n}\mbox{ if }I=i_1\dots i_n.
\label{eq3.8.2}
\end{equation}
  \end{definition}

\begin{proposition}\label{pr3.9}
For the Markov measure $\mu$ defined above, $\mu\circ\sigma_j^{-1}\ll\mu$ and 
\begin{equation}
\frac{d(\mu\circ\sigma_j^{-1})}{d\mu}(x_1x_2\dots)=\delta_{j,x_1}\frac{\lambda_{x_2}}{\lambda_j T_{j,x_2}}.
\label{eq3.9.1}
\end{equation}

\end{proposition}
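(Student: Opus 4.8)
The plan is to guess the density from a direct computation on cylinders and then promote the cylinder identity to all Borel sets by a uniqueness-of-measures argument. Write $g_j$ for the proposed Radon--Nikodym derivative, i.e. the right-hand side of \eqref{eq3.9.1},
$$g_j(x_1x_2\dots)=\delta_{j,x_1}\frac{\lambda_{x_2}}{\lambda_j T_{j,x_2}}.$$
This function depends only on the first two letters of its argument, so it is a finite nonnegative combination of cylinder indicators, $g_j=\sum_{b\in\bz_N}\frac{\lambda_b}{\lambda_j T_{j,b}}\chi_{\C(jb)}$; in particular $g_j\in L^1(\mu)$, and $d\nu_j:=g_j\,d\mu$ defines a finite Borel measure. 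I would then prove the single statement $\mu\circ\sigma_j^{-1}=\nu_j$, which yields both the absolute continuity $\mu\circ\sigma_j^{-1}\ll\mu$ and the explicit formula at once.

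First I would compute $\sigma_j^{-1}$ on a cylinder. Since $\sigma_j(\omega)=j\omega_1\omega_2\dots$, for $I=i_1\dots i_n$ one has $\sigma_j(\omega)\in\C(I)$ exactly when $i_1=j$ and $\omega\in\C(i_2\dots i_n)$; hence
$$\sigma_j^{-1}(\C(i_1\dots i_n))=\begin{cases}\C(i_2\dots i_n)&\text{if }i_1=j,\ n\geq2,\\ \K_N&\text{if }i_1=j,\ n=1,\\ \emptyset&\text{if }i_1\neq j.\end{cases}$$
Feeding this into \eqref{eq3.8.2} gives $(\mu\circ\sigma_j^{-1})(\C(I))$ explicitly. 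On the other side, $\nu_j(\C(I))=\int_{\C(I)}g_j\,d\mu$ is immediate because $g_j$ is constant, equal to $\lambda_{i_2}/(\lambda_j T_{j,i_2})$, on $\C(I)$ when $i_1=j$ and $n\geq2$, is zero there when $i_1\neq j$, and for the length-one cylinders reduces to $\sum_b \frac{\lambda_b}{\lambda_j T_{j,b}}\mu(\C(jb))=\sum_b\lambda_b=\mu(\K_N)$. In every case the two quantities coincide, the key cancellation being $\mu(\C(ji_2\dots i_n))=\frac{\lambda_j T_{j,i_2}}{\lambda_{i_2}}\,\mu(\C(i_2\dots i_n))$, which is precisely what multiplication by $g_j$ undoes.

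Finally I would pass from cylinders to arbitrary Borel sets. The cylinder sets (together with $\emptyset$) form a $\pi$-system generating $\B(\K_N)$, and $\K_N$ is a finite disjoint union of length-one cylinders, so both $\mu\circ\sigma_j^{-1}$ and $\nu_j$ are finite measures agreeing on this generating $\pi$-system and on $\K_N$; by the uniqueness part of the $\pi$--$\lambda$ theorem they agree on all of $\B(\K_N)$. Consequently $(\mu\circ\sigma_j^{-1})(A)=\int_A g_j\,d\mu$ for every Borel $A$, which simultaneously gives $\mu\circ\sigma_j^{-1}\ll\mu$ and identifies $g_j$ as the Radon--Nikodym derivative, proving \eqref{eq3.9.1}. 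The only real care needed is the bookkeeping in the case analysis for $\sigma_j^{-1}$, especially the degenerate length-one case where the preimage is all of $\K_N$ rather than a shorter cylinder, together with using the stationarity relations $\lambda T=\lambda$ and $Te=e$ that make $\mu$ a consistent finite measure (as asserted in Definition \ref{def3.8}); no genuine analytic difficulty arises.
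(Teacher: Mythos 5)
Your proof is correct and takes essentially the same approach as the paper's: both verify the formula by computing $\mu\circ\sigma_j^{-1}$ on a cylinder $\C(I)$ and matching it against $\int_{\C(I)}g_j\,d\mu$, using the cancellation $\frac{\lambda_{i_2}}{\lambda_j T_{j,i_2}}\cdot\lambda_j T_{j,i_2}=\lambda_{i_2}$. You merely make explicit two points the paper leaves implicit, namely the degenerate length-one cylinder case and the $\pi$--$\lambda$ uniqueness argument promoting the cylinder identity to all Borel sets.
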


\begin{proof}
Let $I=i_1\dots i_n$. We have
$$\mu\circ\sigma_j^{-1}(\C(I))=\delta_{j,i_1}\mu(\{(x_1x_2\dots) : x_1=i_2,x_2=i_3,\dots, x_{n-1}=i_n\})=\delta_{j,i_1}\lambda_{i_2}T_{i_2,i_3}T_{i_3,i_4}\dots T_{i_{n-1},i_n}.$$

Now let 
$$F_j(x_1x_2\dots)=\delta_{j,x_1}\frac{\lambda_{x_2}}{\lambda_j T_{j,x_2}}.$$
Then, note that $F_j$ is constant on $\C(I)$ (we assume $n\geq 2$), so
$$\int_{\C(I)}F_j(x)\,d\mu(x)=\delta_{j,i_1}\frac{\lambda_{i_2}}{\lambda_j T_{j,i_2}}\lambda_{i_1}T_{i_1,i_2}T_{i_2,i_3}\dots T_{i_{n-1},i_n}$$
$$=\delta_{j,i_1}\lambda_{i_2}T_{i_2,i_3}T_{i_3,i_4}\dots T_{i_{n-1},i_n}=\mu\circ\sigma_j^{-1}(\C(I)).$$
This proves \eqref{eq3.9.1}.
\end{proof}

With Theorem \ref{th2.5} and Proposition \ref{pr3.6} below we obtain
\begin{corollary}\label{cor3.10}
Let $\mu$ be a Markov measure as above. 
Let $f_j$ be the functions on $\K_N$ defined by
\begin{equation}
f_j(x_1x_2\dots)=\delta_{j,x_1}\sqrt{\frac{\lambda_{x_2}}{\lambda_j T_{j,x_2}}}.
\label{eq3.10.1}
\end{equation}
Let $S_j$ be the operators on $L^2(\mu)$ defined by
\begin{equation}
S_jf=f_j(f\circ\sigma).
\label{eq3.10.2}
\end{equation}
Then $(S_j)_{j\in\bz_N}$ defines a monic representation of $\O_N$ which can be embedded isometrically into the universal representation (from section 4).
\end{corollary}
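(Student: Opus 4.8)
The plan is to read the statement as a direct consequence of Theorem \ref{th2.5}: once I verify that $(\mu,(f_j)_{j\in\bz_N})$ is a nonnegative monic system in the sense of Definition \ref{def2.7}, the representation in \eqref{eq3.10.2} is by construction the representation associated to that monic system, and the embedding claim follows from the universality result of section~4 already signposted as Proposition \ref{pr3.6}.

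First I would check the defining conditions of a monic system. Proposition \ref{pr3.9} already supplies $\mu\circ\sigma_j^{-1}\ll\mu$ together with the explicit derivative $\frac{d(\mu\circ\sigma_j^{-1})}{d\mu}(x_1x_2\dots)=\delta_{j,x_1}\frac{\lambda_{x_2}}{\lambda_j T_{j,x_2}}$. Squaring the $f_j$ displayed in \eqref{eq3.10.1} and using $\delta_{j,x_1}^2=\delta_{j,x_1}$ reproduces exactly this expression, so \eqref{eq2.5.1} holds. To see $f_j\in L^2(\mu)$ I would integrate: $\int|f_j|^2\,d\mu=\int\frac{d(\mu\circ\sigma_j^{-1})}{d\mu}\,d\mu=(\mu\circ\sigma_j^{-1})(\K_N)=\mu(\K_N)=1$, so each $f_j$ is square-integrable. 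The nonvanishing requirement \eqref{eq2.5.2} is where the strict positivity hypotheses enter: for $x\in\sigma_j(\K_N)=\C(j)$ one has $x_1=j$ and $f_j(x)=\sqrt{\lambda_{x_2}/(\lambda_j T_{j,x_2})}$, which is strictly positive precisely because $\lambda_i>0$ and $T_{ij}>0$ for all $i,j$. Nonnegativity of the system is automatic, since $f_j$ is defined as a positive square root.

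With the monic-system conditions established, the operators $S_jf=f_j(f\circ\sigma)$ of \eqref{eq3.10.2} coincide with those attached to this monic system via \eqref{eq2.5.3}, so Theorem \ref{th2.5} immediately gives that $(S_j)_{j\in\bz_N}$ is a nonnegative monic representation of $\O_N$. For the embedding assertion I would invoke the universality statement from section~4 (Proposition \ref{pr3.6}): every nonnegative monic representation embeds isometrically into the universal representation, and we have just shown the Markov representation is both nonnegative and monic, so the embedding follows at once.

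There is no serious obstacle here; the result is a corollary whose content is essentially bookkeeping. The one place demanding genuine care is matching $|f_j|^2$ to the Radon--Nikodym derivative of Proposition \ref{pr3.9} and confirming the $\mu$-a.e.\ nonvanishing on $\C(j)$ — exactly where the hypotheses $\lambda_i>0$, $T_{ij}>0$ are indispensable, since dropping them would break \eqref{eq2.5.2}. The embedding claim itself is not proved in a self-contained way at this point; it is deferred to the general universality theorem of section~4, so the only real external dependency is that Proposition \ref{pr3.6} indeed applies to an arbitrary nonnegative monic representation.
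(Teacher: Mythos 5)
Your proposal is correct and follows exactly the paper's route: the paper derives this corollary by combining Proposition \ref{pr3.9} (which identifies $|f_j|^2$ with the Radon--Nikodym derivative $\frac{d(\mu\circ\sigma_j^{-1})}{d\mu}$), Theorem \ref{th2.5} (monic system $\Rightarrow$ monic representation), and Proposition \ref{pr3.6} (nonnegative monic representations embed isometrically into the universal representation). Your verification of the monic-system axioms, including the $\mu$-a.e.\ nonvanishing of $f_j$ on $\sigma_j(\K_N)$ via $\lambda_i>0$, $T_{ij}>0$, and of nonnegativity, is precisely the bookkeeping the paper leaves implicit.
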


\begin{lemma}\label{lem3.11}
The Markov measure $\mu$ is invariant for $\sigma$, i.e. 
\begin{equation}
\mu\circ\sigma^{-1}=\mu.
\label{eq3.11.1}
\end{equation}
\end{lemma}

\begin{proof}
It is enough to check \eqref{eq3.11.1} on cylinder sets. Let $I=i_1\dots i_n$. We have, using $\lambda T=\lambda$,
$$\mu(\sigma^{-1}(\C(I))=\sum_{i\in\bz_N}\mu(\C(iI))=\sum_{i\in \bz_N}\lambda_iT_{i,i_1}T_{i_1,i_2}\dots T_{i_{n-1},i_n}=\lambda_{i_1}T_{i_1,i_2}\dots T_{i_{n-1},i_n}=\mu(\C(I)).$$
\end{proof}

\begin{lemma}\label{lem3.12}
Let $M$ be the subspace of functions in $L^2(\mu)$ that depend only on the first coordinate. Then $M$ is $S_i^*$-invariant and cyclic for the representation of $\O_N$. 

\end{lemma}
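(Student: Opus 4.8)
The plan is to handle the two assertions separately: the $S_i^*$-invariance, which requires an explicit formula for the adjoints $S_j^*$ on $L^2(\mu)$, and the cyclicity, which is essentially immediate once we observe that the constant function lies in $M$.

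First I would record the structure of $M$. Its elements are exactly the functions of the form $f(x_1x_2\dots)=a_{x_1}$ for an $N$-tuple $(a_0,\dots,a_{N-1})$; since $\mu(\C(i))=\lambda_i>0$ for each $i$, distinct tuples give distinct elements of $L^2(\mu)$, so $M$ is $N$-dimensional, and in particular the constant function $1$ belongs to $M$. (This dimension count is what later makes the one-dimensional case in Example \ref{ex3.15} special.)

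For the invariance, I would compute $S_j^*$ explicitly via \eqref{eq2.5.4}. Because the system of Corollary \ref{cor3.10} is nonnegative, $g_j=1/f_j$ on the support of $f_j$ and $\cj{g_j}=g_j$; substituting the formula \eqref{eq3.10.1} for $f_j$ and carefully tracking the shift $\sigma_j(x_1x_2\dots)=jx_1x_2\dots$ gives
$$(S_j^*f)(x_1x_2\dots)=\sqrt{\frac{\lambda_jT_{j,x_1}}{\lambda_{x_1}}}\,f(jx_1x_2\dots).$$
Applying this to $f\in M$, where $f(jx_1x_2\dots)=a_j$ depends only on the fixed leading symbol $j$, yields $(S_j^*f)(x_1x_2\dots)=a_j\sqrt{\lambda_jT_{j,x_1}/\lambda_{x_1}}$, a function of $x_1$ alone; hence $S_j^*f\in M$ for every $j$. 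The one point needing care is the index bookkeeping in the composition $g_j\circ\sigma_j$: after precomposing with $\sigma_j$, the slot occupied by the ``second coordinate'' in the formula for $g_j$ becomes the coordinate $x_1$, which is why $T_{j,x_1}$ and $\lambda_{x_1}$ appear.

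For cyclicity I would use $1\in M$ together with \eqref{eq2.5.0}, which gives $S_IS_I^*1=\chi_{\C(I)}$. As $I$ ranges over all finite words, the indicators $\chi_{\C(I)}$ have dense linear span in $L^2(\mu)$, because the length-$n$ cylinders partition $\K_N$ and the associated sigma-algebras increase to the full Borel sigma-algebra. Consequently $\Span\{S_IS_J^*v:v\in M,\ I,J\text{ finite words}\}\supseteq\Span\{\chi_{\C(I)}:I\text{ finite word}\}=L^2(\mu)$, so $M$ is cyclic. The main obstacle in the lemma is thus not conceptual but the explicit evaluation of $S_j^*$ on $M$; once that formula is secured, both claims drop out immediately.
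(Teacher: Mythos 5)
Your proof is correct and follows essentially the same route as the paper: the invariance part is the identical computation of $S_j^*$ via \eqref{eq2.5.4}, yielding $(S_j^*f)(x_1x_2\dots)=\sqrt{\lambda_jT_{j,x_1}/\lambda_{x_1}}\,f(jx_1x_2\dots)$, and the cyclicity part rests on $1\in M$ together with $S_IS_I^*1=\chi_{\C(I)}$ and the density of cylinder indicators, which is exactly what the paper invokes when it says $1$ is cyclic for the operators $S_IS_I^*$. Your version merely makes explicit the density argument and the dimension count of $M$, both of which are consistent with the paper.
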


\begin{proof}
Let $f$ be a function in $M$. We have:
$$g_j(x_1x_2\dots)=\frac{f_j}{|f_j|^2}(x_1x_2\dots)=\delta_{j,x_1}\sqrt{\frac{\lambda_jT_{j,x_2}}{\lambda_{x_2}}}.$$
Then, using \eqref{eq2.5.4},
$$S_j^*f(x_1x_2\dots)=\cj g_j(jx_1x_2\dots)f(jx_1x_2\dots)=\sqrt{\frac{\lambda_jT_{j,x_1}}{\lambda_{x_1}}}f(jx_1x_2\dots),$$
so $S_j^*f$ also depends only on the first coordinate. Therefore $M$ is $S_i^*$-invariant. 

To see that $M$ is cyclic, note that the function $1$ is cyclic for the set of operators $S_IS_I^*$. But $S_I^*1$ is in $M$ and therefore the vectors $S_IS_I^*1$ are in $S_IM$ and they span $L^2(\mu)$. 
\end{proof}

\begin{theorem}\label{th3.12}
The representation of $\O_N$ associated to a Markov measure $\mu$, as in Corollary \ref{cor3.10}, is irreducible. 

\end{theorem}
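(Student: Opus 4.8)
The plan is to reduce the statement to a purely dynamical fact via Theorem~\ref{th2.10}, which asserts that the representation attached to a monic system is irreducible exactly when $\sigma$ is ergodic with respect to $\mu$. Since our $\mu$ is the Markov measure of Definition~\ref{def3.8} with a strictly positive transition matrix $T$, this is an instance of the classical ergodicity (indeed mixing) of a stationary finite-state Markov chain whose transition matrix is primitive, and I would prove it directly from the cylinder formula \eqref{eq3.8.2}. After normalizing $\lambda$ so that $\sum_{i}\lambda_i=1$ (which we may, since rescaling $\mu$ changes neither its null sets nor its invariant sets, and hence not its ergodicity), $\mu$ is a shift-invariant probability measure by Lemma~\ref{lem3.11}. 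I would then establish the stronger \emph{mixing} property
$$\mu(\sigma^{-n}(A)\cap B)\longrightarrow \mu(A)\mu(B),\qquad n\to\infty,$$
for all Borel sets $A,B\in\B(\K_N)$; ergodicity follows at once by taking $A$ invariant and $B=A$, which gives $\mu(A)=\mu(A)^2$, hence $\mu(A)\in\{0,1\}$.

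Mixing need only be checked on cylinders $A=\C(I)$ with $I=i_1\dots i_p$ and $B=\C(J)$ with $J=j_1\dots j_q$, since these generate $\B(\K_N)$ and are closed under intersection. For $n\geq q$, the set $\sigma^{-n}(\C(I))\cap\C(J)$ fixes coordinates $1,\dots,q$ to $j_1,\dots,j_q$ and coordinates $n+1,\dots,n+p$ to $i_1,\dots,i_p$, leaving the block $q+1,\dots,n$ free. Summing the weight \eqref{eq3.8.2} over this free block collapses the intermediate transitions into a single power of $T$, giving
$$\mu(\sigma^{-n}(\C(I))\cap\C(J))=\mu(\C(J))\,(T^{\,n-q+1})_{j_q,i_1}\,T_{i_1,i_2}\cdots T_{i_{p-1},i_p}.$$

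Now I would invoke convergence to equilibrium: because every entry of $T$ is strictly positive, $T$ is primitive, so by Perron--Frobenius its powers converge, $(T^m)_{j,i}\to\lambda_i$ as $m\to\infty$, each row tending to the unique stationary distribution $\lambda$. Substituting into the displayed identity yields the limit $\mu(\C(J))\,\lambda_{i_1}T_{i_1,i_2}\cdots T_{i_{p-1},i_p}=\mu(\C(J))\,\mu(\C(I))$, which is precisely mixing on cylinders. Passing from cylinders to arbitrary Borel sets is the routine approximation step: approximate $A,B$ in $L^1(\mu)$ by finite disjoint unions of cylinders, and control the error uniformly in $n$ using $\mu\circ\sigma^{-n}=\mu$. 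The one substantive point, and the place I would be most careful, is the primitivity/Perron--Frobenius input: it is exactly the hypothesis $T_{ij}>0$ that forces irreducibility and aperiodicity and thereby the convergence $(T^m)_{j,i}\to\lambda_i$. Without strict positivity the chain could be periodic (making $\sigma$ ergodic but not mixing) or reducible (making $\sigma$ non-ergodic), so this hypothesis is doing the essential work.
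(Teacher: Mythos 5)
Your proof is correct, but it takes a genuinely different route from the paper's. The paper never passes through ergodicity: it invokes the criterion of \cite{BJKW00} attached to the finite-dimensional $S_i^*$-invariant cyclic subspace $M$ of functions depending only on the first coordinate (Lemma \ref{lem3.12}), computes the matrices $V_i^*=S_i^*P_M$ in an explicit orthonormal basis of $M$, and shows that any solution $X$ of $\sum_{i}V_iXV_i^*=X$ must be diagonal with diagonal entries forming a right $1$-eigenvector of $T$; strict positivity of $T$ then forces $X=cI$ by Perron--Frobenius (or by the elementary maximum argument of Remark \ref{rem3.14}). You instead apply Theorem \ref{th2.10} to reduce irreducibility to ergodicity of $\sigma$ with respect to $\mu$, and prove the stronger mixing property by the classical Markov-chain computation; your cylinder identity, the convergence $(T^{m})_{j,i}\to\lambda_i$ for strictly positive (hence primitive) matrices, and the normalization and approximation bookkeeping are all correct. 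This is precisely the alternative route the authors themselves flag in the remark following Theorem \ref{th3.12}, citing \cite{Pet89} for the well-known ergodicity of such Markov measures. The trade-offs: your argument needs the stronger Perron--Frobenius statement (convergence of powers, not merely that the eigenvalue-$1$ eigenspace is one-dimensional), and it consumes ergodicity as an input, whereas the paper's operator-theoretic proof produces ergodicity as an output (Corollary \ref{cor3.13} is deduced from Theorem \ref{th3.12} via the converse direction of Theorem \ref{th2.10}) and sets up the $V_i$-matrix machinery that is reused almost verbatim in the disjointness proof of Theorem \ref{th3.13}. In particular, if your proof replaced the paper's, Corollary \ref{cor3.13} would no longer be a consequence but a prerequisite, so within the paper's architecture the authors' route carries extra weight even though yours is perfectly sound as a proof of the theorem itself.
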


\begin{proof}
We use a result from \cite{BJKW00}, see also \cite[Theorem 5.5]{DHJ13}, which states the following: since we have the $S_i^*$-invariant cyclic subspace $M$, to verify that the representation is irreducible, let $V_i^*=S_i^*P_M$, where $P_M$ is the projection onto $M$, and the only solutions to the equation 
\begin{equation}
\sum_{i\in\bz_N}V_iXV_i^*=X
\label{eq3.12.1}
\end{equation}
should be constant multiples of the identity operator. 

The subspace $M$ has the following orthonormal basis 
$$e_j(x_1x_2\dots)=\frac{1}{\sqrt{\lambda_j}}\delta_{j,x_1},\quad (j\in\bz_N).$$
We compute 
$$V_i^*e_j(x_1x_2\dots)=S_i^*e_j(x_1x_2\dots)=\sqrt{\frac{\lambda_i T_{i,x_1}}{\lambda_j\lambda_{x_1}}}\delta_{j,i},$$
so
$$V_i^*e_j=\delta_{i,j}\sum_{x_1\in\bz_N}\sqrt{ T_{i,x_1}}e_{x_1}.$$
Therefore the matrix of $V_i^*$ in this orthonormal basis is
\begin{equation}
\left(\delta_{j,i}\sqrt{ T_{i,k}}\right)_{k,j\in\bz_N},
\label{eq3.12.2}
\end{equation}
so it has only one non-zero column on position $i$. 

Let $v_i=(\sqrt{{ T_{i,j}}})_{j\in\bz_N}^T$.

Let $X$ be a solution for \eqref{eq3.12.1}. Then the matrix of $V_iXV_i^*$ has only one non-zero entry on the $i$-th position of the diagonal, and that is equal to $\ip{v_i}{Xv_i}$. Thus, the matrix $X$ has to be diagonal and we have  
$$\ip{v_i}{Xv_i}=X_{i,i}.$$
This implies that, for all $i\in\bz_N$,
$$\sum_{j\in\bz_N}{ T_{i,j}X_{j,j}}=X_{i,i}.$$

But this means that the vector $(X_{i,i})_{i\in \bz_N}$ is an right-eigenvector for $T$ with eigenvalue 1, and since the entries of $T$ are positive, the Perron-Frobenius theorem (see also Remark \ref{rem3.14}) implies that $X_{i,i}=c$ for all $i$ for some constant $c$. So $X=cI$ and the representation is irreducible.
\end{proof}

\begin{remark}
Another way of proving the fact that the representation of $\O_N$ associated with a Markov measure is irreducible, by Theorem \ref{th2.10}, is by showing that $\sigma$ is ergodic with respect to $\mu$. This fact is well known, see e.g. \cite{Pet89}. But the converse also holds, so our proof shows also that $\sigma$ is ergodic with respect to $\mu$. 

\end{remark}

\begin{corollary}\label{cor3.13}
The shift $\sigma$ is ergodic with respect to the Markov measure $\mu$.
\end{corollary}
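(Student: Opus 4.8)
The plan is to read this off directly from the two structural results already in hand, so that no new computation is required. First I would confirm that the Markov measure $\mu$, together with the functions $f_j$ of \eqref{eq3.10.1}, constitutes a monic system in the sense of Definition \ref{def2.7}: Proposition \ref{pr3.9} establishes the quasi-invariance $\mu\circ\sigma_j^{-1}\ll\mu$ with Radon--Nikodym derivative $|f_j|^2$, and the nonvanishing requirement \eqref{eq2.5.2} holds because $T_{j,x_2}>0$ and $\lambda_{x_2}>0$ force $f_j$ to be nonzero $\mu$-a.e.\ on $\sigma_j(\K_N)$. Hence the associated operators $(S_j)_{j\in\bz_N}$ of Corollary \ref{cor3.10} form a monic representation to which Theorem \ref{th2.10} applies.

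Next I would invoke Theorem \ref{th2.10}, which characterizes irreducibility of the representation attached to a monic system precisely as ergodicity of $\sigma$ with respect to $\mu$: the commutant consists of multiplication operators by $\sigma$-invariant functions, so it is trivial exactly when the only $\sigma$-invariant Borel sets are null or co-null. This is the bridge that converts the operator-theoretic statement of irreducibility into the desired dynamical one.

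Finally I would combine this with Theorem \ref{th3.12}, where the irreducibility of the Markov representation was already proved by reducing the fixed-point equation \eqref{eq3.12.1} to the eigenvalue problem $\sum_j T_{i,j}X_{j,j}=X_{i,i}$ and applying Perron--Frobenius to the positive matrix $T$. Since that theorem delivers irreducibility, Theorem \ref{th2.10} immediately yields that $\sigma$ is ergodic with respect to $\mu$. I do not expect a genuine obstacle here; the entire content sits in the Perron--Frobenius step of Theorem \ref{th3.12}, and the only point of interest is that this representation-theoretic route reproves the classical ergodicity of the shift for Markov measures (cf.\ \cite{Pet89}) with no separate dynamical argument.
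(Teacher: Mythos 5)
Your proposal is correct and follows exactly the paper's own route: Theorem \ref{th3.12} gives irreducibility of the Markov representation, and Theorem \ref{th2.10} translates irreducibility into ergodicity of $\sigma$ with respect to $\mu$. The preliminary check that $(\mu,(f_j)_j)$ is a genuine monic system (via Proposition \ref{pr3.9} and positivity of $T$ and $\lambda$) is a welcome bit of extra care, but it matches what the paper already establishes in Corollary \ref{cor3.10}.
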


\begin{proof}
By Theorem \ref{th3.12} the representation of $\O_N$ associated with the Markov measure $\mu$ is irreducible. The result folows then from Theorem \ref{th2.10}.

\end{proof}

\begin{theorem}\label{th3.13}
Let $\mu$, $\mu'$ be two Markov measures associated to $(T,\lambda)$ and $(T',\lambda')$ respectively. If $T\neq T'$ then the two representations of $\O_N$ are disjoint. Consequently, the two measures are mutually singular. 

\end{theorem}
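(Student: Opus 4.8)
The plan is to argue via the dichotomy for irreducible representations. By Theorem \ref{th3.12} both representations, call them $\pi$ (from $(T,\lambda)$) and $\pi'$ (from $(T',\lambda')$), are irreducible, and two irreducible representations of a $C^*$-algebra are either unitarily equivalent or disjoint. So it suffices to show that if $T\neq T'$ then $\pi$ and $\pi'$ are \emph{not} unitarily equivalent; disjointness follows at once, and then the mutual singularity of $\mu,\mu'$ is immediate from Theorem \ref{tha2.10} (the Markov monic systems of Corollary \ref{cor3.10} are nonnegative, so that theorem applies and converts disjointness into $\mu\perp\mu'$).

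To prove non-equivalence I would suppose, toward a contradiction, that $\pi\cong\pi'$. By Theorem \ref{th2.9} this forces, in particular, that $\mu$ and $\mu'$ are equivalent measures; I only need the direction $\mu'\ll\mu$. Now both $\mu$ and $\mu'$ are $\sigma$-invariant by Lemma \ref{lem3.11}, and $\sigma$ is ergodic with respect to $\mu$ by Corollary \ref{cor3.13}. The key fact I would isolate is: a finite $\sigma$-invariant measure $\mu'$ that is absolutely continuous with respect to an ergodic $\sigma$-invariant measure $\mu$ must be a scalar multiple of $\mu$. I would prove this by Birkhoff's ergodic theorem: for bounded $f$, $\frac1n\sum_{k=0}^{n-1}f\circ\sigma^k\to \mu(\K_N)^{-1}\int f\,d\mu$ holds $\mu$-a.e., hence $\mu'$-a.e.\ since $\mu'\ll\mu$; integrating against $\mu'$, using $\sigma$-invariance of $\mu'$ on the left (so each term equals $\int f\,d\mu'$) and dominated convergence on the right (as $\mu'$ is finite), yields $\int f\,d\mu'=\frac{\mu'(\K_N)}{\mu(\K_N)}\int f\,d\mu$ for all bounded $f$, i.e.\ $\mu'=c\mu$ with $c=\mu'(\K_N)/\mu(\K_N)$.

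Given $\mu'=c\mu$, I read off the parameters from the cylinder values supplied by \eqref{eq3.8.2}: $\lambda_a'=\mu'(\C(a))=c\,\mu(\C(a))=c\lambda_a$ and $\lambda_a'T'_{a,b}=\mu'(\C(ab))=c\,\mu(\C(ab))=c\lambda_a T_{a,b}$, and dividing gives $T'_{a,b}=T_{a,b}$ for all $a,b\in\bz_N$, that is $T=T'$, contradicting $T\neq T'$. Hence $\pi\not\cong\pi'$, so $\pi$ and $\pi'$ are disjoint, and $\mu\perp\mu'$ by Theorem \ref{tha2.10}.

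The main obstacle is precisely this middle step of ruling out equivalence. What makes it painless is the observation that equivalence of the \emph{representations} already forces equivalence of the two $\sigma$-invariant \emph{measures}, after which ergodicity collapses them to a common measure and hence to $T=T'$; one never has to manipulate the intertwining function $h$ of Theorem \ref{th2.9} or the roots $f_i,f_i'$ explicitly. A purely computational alternative would be to prove $\mu\perp\mu'$ directly in Kakutani's style, via Hellinger integrals of the cylinder ratios $\rho_n(x)=\mu'(\C(x_1\dots x_n))/\mu(\C(x_1\dots x_n))$, and only then invoke Theorem \ref{tha2.10} to deduce disjointness; but the ergodic route above is shorter and reuses machinery already developed in the paper.
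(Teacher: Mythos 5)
Your proposal is correct, but it takes a genuinely different route from the paper. The paper never passes through irreducibility or ergodic theory: it works directly with the criterion from \cite{BJKW00} already used in Theorem~\ref{th3.12}, taking the two $N$-dimensional $S_i^*$-invariant cyclic subspaces $M,M'$ of functions of the first coordinate, writing the matrices of $V_i^*=S_i^*P_M$ and $V_i'^*=S_i'^*P_{M'}$ in the natural orthonormal bases, and showing that any solution $X$ of the intertwining equation $\sum_i V_i'XV_i^*=X$ must be diagonal and then, via a Cauchy--Schwarz estimate on the entry of maximal modulus together with the equality analysis (which is exactly where $T\neq T'$ enters), must vanish; disjointness follows at once, and singularity from Theorem~\ref{tha2.10}. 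Your argument instead leverages the structural results: irreducibility of both representations (Theorem~\ref{th3.12}), the standard Schur-lemma dichotomy that irreducible representations are either unitarily equivalent or disjoint (note that disjointness in the sense of no nonzero intertwiner is indeed what Theorem~\ref{tha2.10} uses, so the notions match), then Theorem~\ref{th2.9} to convert hypothetical equivalence of representations into equivalence of measures, and finally the classical ergodic rigidity fact --- a finite $\sigma$-invariant measure absolutely continuous with respect to an ergodic $\sigma$-invariant measure is a scalar multiple of it --- proved via Birkhoff, to force $\mu'=c\mu$ and hence $T=T'$ from the cylinder values \eqref{eq3.8.2}. Your ordering of citations is sound: Theorem~\ref{th3.12}, Corollary~\ref{cor3.13}, Lemma~\ref{lem3.11}, and Theorem~\ref{th2.9} all precede Theorem~\ref{th3.13}, so there is no circularity, and the nonnegativity hypothesis of Theorem~\ref{tha2.10} is satisfied by the systems of Corollary~\ref{cor3.10}. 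What each approach buys: the paper's computation is self-contained modulo \cite{BJKW00}, needs no external ergodic theorem, and as Remark~\ref{rem3.14} observes, the same inequality argument re-proves irreducibility for free; your route avoids all matrix manipulation, makes transparent that the real mechanism is uniqueness of the ergodic invariant measure in its absolute-continuity class, and would generalize immediately to any pair of monic systems whose measures are $\sigma$-invariant and ergodic, at the cost of invoking Birkhoff and the irreducible-representation dichotomy.
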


\begin{proof}
As in the proof of Theorem \ref{th3.12}, we use the result in \cite{BJKW00}. We have the $S_i^*$-invariant cyclic subspaces $M$, $M'$ of $L^2(\mu)$ and $L^2(\mu')$ consisting of functions which depend only on the first coordinate. Define $V_i^*=S_i^*P_M$ and $V_i'^*=S_i'^*P_{M'}$. The representations are disjoint if and only if the only solution for 
\begin{equation}
\sum_{i\in\bz_N}V_i'XV_i^*=X,
\label{eq3.13.1}
\end{equation}
are multiples of the identity.

We use the same orthonormal basis for $M$ and similarly for $M'$ and we have that the matrix of $V_i^*$ is given in \eqref{eq3.12.2}, similarly for $V_i'^*$. Then the equation \eqref{eq3.13.1} implies that $X$ has to be a diagonal matrix and 
$$\sum_{j\in\bz_N}\sqrt{T'_{i,j}}X_{j,j}\sqrt{T_{i,j}}=X_{i,i},\quad(i\in\bz_N).$$

Let $i$ be such that $|X_{i,i}|=\max_k|X_{k,k}|$. We have, using the Schwarz inequality:
$$|X_{i,i}|\leq \sum_{j\in\bz_N}\sqrt{T'_{i,j}}|X_{j,j}|\sqrt{T_{i,j}}\leq\left(\sum_{j\in\bz_N}T'_{i,j}\right)^{\frac12}\left(\sum_{j\in\bz_N}|X_{j,j}|^2T_{i,j}\right)^{\frac12}$$$$\leq |X_{i,i}|\left(\sum_{j\in\bz_N}T_{i,j}\right)^{\frac12}=|X_{i,i}|.$$
Therefore, we must have equalities in all inequalities. So $|X_{j,j}|=|X_{i,i}|$ for all $j$. Also, we have equality in the Schwarz inequality and this means that the vectors $(T_{i,k})_k$ and $(|X_{k,k}|T_{i,k}')_k$ are proportional. Since the sum of their components is the same, the two vectors are equal. So because $T\neq T'$, we get $|X_{k,k}|=0$. Therefore $X=0$.  

The last statement follows from Theorem \ref{tha2.10}.

\end{proof}

\begin{remark}\label{rem3.14}
In the proof of Theorem \ref{th3.12} we used the Perron-Frobenius theorem to conclude that $X$ is a multiple of the identity. But this is not really needed; the argument used in the proof of Theorem \ref{th3.13} can be used instead: since we have equality in the first triangle inequality, it follows that $X_{k,k}=c|X_{k,k}|$ for all $k$ for some constant $c$. So $X_{k,k}$ is constant and $X$ is a multiple of the identity.
\end{remark}

\begin{example}\label{ex3.15}
Let $z_i$, $i\in\bz_N$ be some complex numbers with $\sum_i|z_i|^2=1$. Let $p_i=|z_i|^2$, $i\in\bz_N$. Define the matrix 
$$T=\begin{pmatrix}
	p_1&p_2&\dots&p_n\\
	p_1&p_2&\dots&p_n\\
	\vdots&\vdots&\ddots&\vdots\\
	p_1&p_2&\dots&p_n
\end{pmatrix}$$
Let $\lambda_i=p_i$ for all $i\in\bz_N$. 
The associated Markov measure for these particular parameters $T$ and $\lambda$ is the Kakutani measure \cite{Kak48}
$$\mu(\C(i_1\dots i_n))=p_{i_1}\dots p_{i_n},\quad(i_1,\dots,i_n\in\bz_N).$$
It satisfies the invariance equation
\begin{equation}
\int f\,d\mu=\sum_{i\in\bz_N}p_i\int f\circ\sigma_i\,d\mu,\quad (f\in C(\K_N)).
\label{eq3.15.1}
\end{equation}
(This can be checked first on characteristic functions of cylinder sets).

Define the functions 
\begin{equation}
f_i=\frac{1}{z_i}\chi_{\sigma_j(\K_N)},\quad(i\in\bz_N)
\label{eq3.15.2}
\end{equation}

(Note that the absolute value $|f_i|$ matches the formula in \eqref{eq3.10.1}).

We check that the equation \eqref{eq2.5.1} is satisfied. We have from \eqref{eq3.15.1}, for $f\in C(\K_N)$,\
$$\int f\chi_{\sigma_i(\K_N)}\,d\mu=\sum_jp_j(\chi_{\sigma_i(\K_N)}\circ\sigma_j)(f\circ\sigma_j)\,d\mu=p_i\int(f\circ\sigma_i)\,d\mu=p_i\int f\,d(\mu\circ\sigma_i^{-1}).$$
This implies \eqref{eq2.5.1}.

Therefore, by Theorem \ref{th2.5}, the operators
$$S_if=f_i(f\circ\sigma),\quad(i\in\bz_N,f\in L^2(\mu)),$$
define a monic representation of $\O_N$.

The function $\varphi=1$ is cyclic for $\mathfrak A_N$. Also, with \eqref{eq2.5.4}, we have $g_i=z_i\chi_{\sigma_i(\K_N)}$ and
$$S_i^*\varphi=z_i(\chi_{\sigma_i(\K_N)}\circ\sigma_i)(\varphi\circ\sigma_i)=z_i\varphi.$$

Thus the one-dimensional space $M$ spanned by $\varphi$ is $S_i^*$-invariant and cyclic for the representation of $\O_N$.

\end{example}

\begin{corollary}\label{cor3.16}
Let $(S_i)_{i\in\bz_N}$ be a representation of $\O_N$ on a Hilbert space $\H$. Suppose that there is a cyclic vector $\varphi\in\H$ and some complex numbers $z_i$, $i\in\bz_N$ such that $S_i^*\varphi=z_i\varphi$ for all $i\in\bz_N$. 
Then the representation is monic.
\end{corollary}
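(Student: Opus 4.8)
The plan is to show that the given vector $\varphi$, which is cyclic for the representation, is in fact cyclic for the abelian subalgebra $\mathfrak A_N$; monicity then follows directly from Definition \ref{def2.4}. The engine of the argument is that the eigenrelation $S_i^*\varphi = z_i\varphi$ propagates to every finite word: since $S_J^* = S_{j_m}^*\cdots S_{j_1}^*$ for $J = j_1\cdots j_m$, applying the relation one letter at a time gives
\begin{equation}
S_J^*\varphi = z_J\,\varphi, \qquad z_J := z_{j_1}\cdots z_{j_m},
\label{eqcor1}
\end{equation}
so $\varphi$ is a simultaneous eigenvector of all the $S_J^*$.

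First I would unwind cyclicity of $\varphi$ for the representation. By Definition \ref{def0.1} (with $M = \bc\varphi$) this says $\Span\{S_IS_J^*\varphi : I,J\text{ finite words}\} = \H$. Substituting \eqref{eqcor1} turns each generator into $S_IS_J^*\varphi = z_J\,S_I\varphi$, a scalar multiple of some $S_I\varphi$, whence $\Span\{S_I\varphi : I\} = \H$. Next, the same relation applied to the projections of $\mathfrak A_N$ gives $S_IS_I^*\varphi = S_I(z_I\varphi) = z_I\,S_I\varphi$. Comparing the two families, $\{S_IS_I^*\varphi\}$ and $\{S_I\varphi\}$ span the same closed subspace as soon as the scalars $z_I$ are nonzero, and $z_I\neq 0$ for every word $I$ precisely when each $z_i\neq 0$. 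Under this nonvanishing, $\Span\{S_IS_I^*\varphi\} = \Span\{S_I\varphi\} = \H$, so $\varphi$ is cyclic for $\mathfrak A_N$ and the representation is monic.

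The step to watch, and the one I expect to be the main obstacle, is the nonvanishing of the $z_i$. Note first that $\sum_{i\in\bz_N}|z_i|^2 = 1$ is forced, since $\sum_i\|S_i^*\varphi\|^2 = \ip{\varphi}{\sum_i S_iS_i^*\varphi} = \|\varphi\|^2$; but this normalization does not by itself exclude an individual $z_i = 0$. If some $z_i = 0$ then $S_i^*\varphi = 0$, i.e. $\varphi\perp\operatorname{Ran}(S_i)$, and a short computation using \eqref{eqcor1} shows that $S_i\varphi$ (which is nonzero, as $S_i$ is an isometry) is orthogonal to every $S_IS_I^*\varphi$, so $\varphi$ itself is no longer cyclic for $\mathfrak A_N$. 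This degenerate possibility is exactly what Example \ref{ex3.15} rules out through the formula $f_i = z_i^{-1}\chi_{\sigma_i(\K_N)}$, which requires $z_i\neq 0$; I would therefore either carry that nonvanishing as a standing hypothesis inherited from the example, or verify that it holds in the intended setting before invoking the clean argument above.
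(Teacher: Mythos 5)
Your core argument is correct and it takes a genuinely different route from the paper. The paper's proof is indirect: it notes $\sum_i|z_i|^2=1$, invokes the uniqueness theorem of \cite{BJKW00} (a representation of $\O_N$ together with a cyclic $S_i^*$-invariant subspace and the compressed operators $V_i^*=S_i^*P_M$ is determined up to unitary equivalence), and concludes that the given representation is unitarily equivalent to the Kakutani-measure representation of Example \ref{ex3.15}, which is monic. You instead prove monicity directly: the eigenrelation propagates to $S_J^*\varphi=z_J\varphi$, cyclicity of $\varphi$ for the representation collapses to $\Span\{S_I\varphi : I\}=\H$, and $S_IS_I^*\varphi=z_I S_I\varphi$, so that when every $z_i\neq0$ the vector $\varphi$ itself is cyclic for $\mathfrak A_N$, which is exactly Definition \ref{def2.4}. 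This is more elementary and self-contained (no external uniqueness theorem, no need for Example \ref{ex3.15}); what the paper's route buys instead is an explicit identification of the associated monic system as the Kakutani measure with weights $p_i=|z_i|^2$, which your argument does not produce.

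Concerning the degenerate case: you are right that your argument requires $z_i\neq0$, and your observation that $S_i\varphi\perp\Span\{S_IS_I^*\varphi : I\}$ when $z_i=0$ correctly shows that $\varphi$ is then \emph{not} cyclic for $\mathfrak A_N$. You are also right that Example \ref{ex3.15} carries the same restriction (the formula $f_i=z_i^{-1}\chi_{\sigma_i(\K_N)}$, and the positivity requirements of Definition \ref{def3.8}; indeed if some $p_i=0$ then $\mu\circ\sigma_i^{-1}$ fails to be absolutely continuous with respect to $\mu$), so the paper's own proof is no more complete on this point than yours. However, your proposed fix --- adding $z_i\neq0$ as a hypothesis --- weakens the statement unnecessarily: the corollary is true as stated, because monicity only requires \emph{some} $\mathfrak A_N$-cyclic vector, not $\varphi$. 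For example, if exactly one $z_{i_0}$ is nonzero, then $|z_{i_0}|=1$, the Cuntz relations force $S_{i_0}\varphi=\overline{z_{i_0}}\varphi$, and the representation is atomic with one-dimensional atoms, hence monic by Theorem \ref{th4.2}. In general, letting $Z=\{i : z_i\neq0\}$, one decomposes $\H$ into mutually orthogonal subspaces $S_J\H_Z$, where $\H_Z=\Span\{S_K\varphi : K\in Z^*\}$ and $J$ runs over the empty word and words ending in a letter outside $Z$; on each piece $\mathfrak A_N$ acts by multiplication operators for mutually singular measures, and a vector of the form $\sum_J c_J S_J\varphi$ with all $c_J\neq0$ is $\mathfrak A_N$-cyclic. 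So the honest conclusion is that both your proof and the paper's need this supplementary argument to cover the statement in full generality.
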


\begin{proof}
The Cuntz relations imply that $\sum_i|z_i|^2=1$. 
The existence of such a cyclic $S_i^*$-invariant space determines completely the representation (see \cite[Theorem 5.1]{BJKW00}), therefore this representation is equivalent to the one in Example \ref{ex3.15}, so it is monic.

\end{proof}

\begin{remark}\label{rem3.13}
Let $(S_i)_{i\in\bz_N}$ be a representation of $\O_N$ on some Hilbert space $\H$. Suppose there exists a closed $S_i^*$-invariant, cyclic space $M$. Let $P_M$ be the projection onto $M$. Define the operators 
\begin{equation}
V_i^*=S_i^*P_M=P_MS_i^*P_M.
\label{eqa3.13.1}
\end{equation}
Then, by \eqref{eq2.0.1} these operators satisfy the equation
\begin{equation}
\sum_{i\in\bz_N}V_iV_i^*=I_M.
\label{eqa3.13.2}
\end{equation}

It is known that a converse also holds (see e.g., \cite{BJKW00}): if some operators $V_i$ are given on a space $M$, satisfying \eqref{eqa3.13.2}, then there exists a bigger Hilbert space $\H$ and a representation $(S_i)_{i\in\bz_N}$ of $\O_N$ such that $M$ is $S_i^*$-invariant and cyclic and \eqref{eqa3.13.1} holds. Moreover this representation is unique up to unitary equivalence. 

It is natural to ask what kind of operators $(V_i)_{i\in \bz_N}$ on some Hilbert space $M$ would yield {\it monic} representations with the result mentioned above. We have here two examples: Corollary \ref{cor3.16} shows that one-dimensional spaces $M$ always yield monic representations, associated with Kakutani measures, and secondly, the transpose of the matrices in \eqref{eq3.12.2} also yield monic representations associated with Markov measures, as we can see from the proof of Theorem \ref{th3.12}.

\end{remark}

%
%
%
%
%
%

\subsection{Atomic representations}

Recall some notions from \cite{DHJ13}:
\begin{definition}\label{def4.1}
A representation $(S_i)_{i\in\bz_N}$ of $\O_N$ is called {\it atomic} if there exist a subset $\Omega$ of $\K_N$ such that 
$$\sum_{\omega\in\Omega}P(\omega)=I.$$
If $\omega$ is an element of $\K_N$ such that $P(\omega)\neq 0$, then $\omega$ is called an {\it atom}.
\end{definition}

\begin{theorem}\label{th4.2}
Let $(S_i)_{i\in \bz_N}$ be an atomic representation of $\O_N$ on a separable Hilbert space $\H$. Then the representation is monic if and only if, for every atom $\omega\in\K_N$, $P(\omega)\H$ is one-dimensional. In this case, the associated measure from the monic system is atomic and the set of atoms is countable.

\end{theorem}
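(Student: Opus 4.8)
The plan is to use the atomic decomposition to turn $\mathfrak A_N$ into a diagonal algebra, and then to read off directly when it admits a cyclic vector.

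First I would record the decomposition. By Definition \ref{def4.1} there is a set $\Omega\subset\K_N$ with $\sum_{\omega\in\Omega}P(\omega)=I$; after discarding the indices with $P(\omega)=0$ we may take $\Omega$ to be exactly the set of atoms (any $\omega\notin\Omega$ satisfies $P(\omega)=P(\omega)\sum_{\omega'\in\Omega}P(\omega')=0$, since $P(\{\omega\})P(\{\omega'\})=P(\{\omega\}\cap\{\omega'\})=0$ for $\omega\neq\omega'$, so $\Omega$ does contain all atoms). Those same orthogonality relations give $\H=\bigoplus_{\omega\in\Omega}P(\omega)\H$, and separability of $\H$ forces $\Omega$ to be countable. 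Because each generator $P(\C(I))=S_IS_I^*$ of $\mathfrak A_N$ satisfies $P(\C(I))P(\omega)\in\{0,P(\omega)\}$, the algebra $\mathfrak A_N$ acts by scalars on every block $P(\omega)\H$.

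The key step is to identify, for an arbitrary $\varphi\in\H$, the cyclic subspace
$$\Span\{P(\C(I))\varphi:I\text{ finite}\}=\Span\{P(\omega)\varphi:\omega\in\Omega\}=\bigoplus_{\omega\in\Omega}\bc\,P(\omega)\varphi.$$
The inclusion ``$\subseteq$'' is immediate from $P(\C(I))\varphi=\sum_{\omega\in\Omega\cap\C(I)}P(\omega)\varphi$. For ``$\supseteq$'' I would use $\{\omega\}=\bigcap_n\C(\omega|n)$, so that the decreasing projections $P(\C(\omega|n))$ converge strongly to $P(\omega)$ and hence $P(\omega)\varphi=\lim_nP(\C(\omega|n))\varphi$ belongs to the left-hand span.

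With this in hand the equivalence falls out: $\varphi$ is cyclic for $\mathfrak A_N$ precisely when $\overline{\bc\,P(\omega)\varphi}=P(\omega)\H$ for every $\omega$, and this holds iff $\dim P(\omega)\H=1$ and $P(\omega)\varphi\neq0$ for all $\omega$. Thus if the representation is monic, a cyclic $\varphi$ exists and every atom must have one-dimensional range. Conversely, if every $P(\omega)\H$ is one-dimensional, I would choose a unit vector $\psi_\omega$ in each and weights $c_\omega>0$ with $\sum_\omega c_\omega^2<\infty$ (possible since $\Omega$ is countable) and set $\varphi=\sum_\omega c_\omega\psi_\omega$; then $P(\omega)\varphi=c_\omega\psi_\omega\neq0$, so $\varphi$ is cyclic and the representation is monic. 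For the final assertion, taking $\mu=\mathfrak m_\varphi$ as in Theorem \ref{th2.5} gives $\mu(A)=\ip{\varphi}{P(A)\varphi}=\sum_{\omega\in\Omega\cap A}\|P(\omega)\varphi\|^2$, i.e.\ $\mu=\sum_{\omega\in\Omega}\|P(\omega)\varphi\|^2\delta_\omega$, a purely atomic measure whose (countable) set of atoms is exactly $\Omega$.

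I expect the only genuinely delicate point to be the ``$\supseteq$'' inclusion in the key step---that the subspace generated using only the cylinder projections $P(\C(I))$ already recovers each individual atomic projection $P(\omega)\varphi$---which rests on the strong convergence $P(\C(\omega|n))\to P(\omega)$ coming from countable additivity of the projection-valued measure $P$. Everything else is bookkeeping with the orthogonal decomposition and an elementary choice of cyclic vector.
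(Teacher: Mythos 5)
Your proof is correct, but it takes a genuinely different route from the paper's for the forward implication. The paper argues by contradiction via maximal abelianness: if some atom $\omega$ had $\dim P(\omega)\H\geq 2$, the rank-one projections onto two orthogonal unit vectors in $P(\omega)\H$ would commute with every $P(A)$, yet by Theorem \ref{th2.5} a monic representation realizes $\pi(\mathfrak A_N)$ as multiplication operators with a cyclic vector, hence a maximal abelian algebra whose commutant consists only of multiplication operators --- a contradiction, since those rank-one projections are not of that form. You instead compute, for an arbitrary vector $\varphi$, the cyclic subspace $\Span\{P(\C(I))\varphi : I \mbox{ finite}\}=\bigoplus_{\omega\in\Omega}\bc\,P(\omega)\varphi$, using the strong convergence $P(\C(\omega|n))\to P(\omega)$, and read off both implications from this single identity. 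This is more elementary and self-contained (it never invokes Theorem \ref{th2.5} or maximality of abelian algebras), and it has the added virtue of making explicit the limit argument $P(\omega)\varphi=\lim_n P(\C(\omega|n))\varphi$, which the paper's converse direction also needs but leaves implicit (the paper simply asserts $e_n\in\Span\{P(I)\varphi : I\mbox{ finite}\}$ from $P(\omega_n)\varphi=2^{-n}e_n$). Your converse (choosing $\varphi=\sum_\omega c_\omega\psi_\omega$ with positive square-summable weights) and your identification of $\mu=\mathfrak m_\varphi=\sum_\omega\|P(\omega)\varphi\|^2\delta_\omega$ as a purely atomic measure match the paper's; you also spell out why separability forces $\Omega$ to be countable, which the paper uses tacitly. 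What the paper's route buys is brevity, given Theorem \ref{th2.5} as a black box; what yours buys is a single structural computation that handles both directions at once.
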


\begin{proof}
Suppose the representation is monic and there is an atom $\omega$ such that $P(\omega)\H$ has dimension bigger than 2. Let $v_1,v_2$ be two unitary orthogonal vectors in $P(\omega)\H$, and let $P_{v_1}$ and $P_{v_2}$ be the corresponding orthogonal projections. Then it is easy to see that $P_{v_1}$ and $P_{v_2}$ commute with $P(\omega')$ for all atoms $\omega'$. Then, they commute with $P(A)$ for any Borel subset $A$ of $\K_N$. 

But, from Theorem \ref{th2.5} and its proof, we see that $P(A)$ can be considered as multiplication operators (see \eqref{eq2.5.0}), and they have a cyclic vector, so they form a maximal abelian subalgebra, and therefore any operator that commutes with $P(A)$, for all $A$, must be a multiplication operator. But $P_{v_1}$ and $P_{v_2}$ are not. 

The measure $\mu$ from the monic system is supported on the union of all atoms, and therefore it is atomic.

For the converse, we have that there is a sequence $\{\omega_n\}_{n\in\bn}$ of atoms with $\sum_nP(\omega_n)=I$ and some unitary vectors $e_n$ such that $e_n$ spans $P(\omega_n)$. Then $\{e_n :n\in\bn\}$ is an orthonormal basis for $\H$. Define $\varphi=\sum_n\frac{1}{2^n}e_n$. Then $P(\omega_n)\varphi=\frac1{2^n}e_n$ so 
$$e_n\in\Span\{S_IS_I^*=P(I) : I\mbox{ finite word}\}.$$
This implies that the representation is monic.

\end{proof}

\section{A universal representation}
  
  We recall some facts from \cite{Nel69}. The Hilbert space of $\sigma$-functions on $\K_N$ is constructed as follows.
  
  \begin{definition}\label{defuniv}
   We write $(f,\mu)$ for a pair with $f$ in $L^2(\mu)$ and $\mu$ a finite Borel measure on $\K_N$. We say that $(f,\mu)$ and $(g,\nu)$ are equivalent $(f,\mu)\sim(g,\nu)$ if there exists some measure $\lambda$ such that $\mu\ll\lambda$, $\nu\ll\lambda$ and 
  $$f\sqrt{\frac{d\mu}{d\lambda}}=g\sqrt{\frac{d\nu}{d\lambda}},\mbox{ $\lambda$-a.e.}.$$
  The equivalence class of a pair $(f,\mu)$ is denoted $f\sqrt{d\mu}$ and is called a $\sigma$-function. The set of all $\sigma$-functions is denoted $\H(\K_N)$ and it is a Hilbert space with addition defined by
  $$f\sqrt{d\mu}+g\sqrt{d\nu}=\left(f\sqrt{\frac{d\mu}{d\lambda}}+g\sqrt{\frac{d\nu}{d\lambda}}\right)\sqrt{d\lambda}$$
  where $\mu\ll\lambda$, $\nu\ll\lambda$, and the inner product defined by
  $$\ip{f\sqrt{d\mu}}{g\sqrt{d\nu}}=\int \cj f g\sqrt{\frac{d\mu}{d\lambda}}\sqrt{\frac{d\nu}{d\lambda}}\,d\lambda.$$

  We define now the {\it universal isometries} $S_i$ on $\H(\K_N)$ by
  \begin{equation}
S_i(f\sqrt{d\mu})=(f\circ\sigma)\sqrt{d\mu\circ\sigma_i^{-1}}.
\label{eq7.1}
\end{equation}

\end{definition}
  
  \begin{proposition}\label{pr7.2}
  The isometries $(S_i)_{i\in\bz_N}$ in \eqref{eq7.1} define a representation of the Cuntz algebra $\O_N$. The adjoints are given by
  \begin{equation}
S_i^*(f\sqrt{d\mu})=f\circ\sigma_i\sqrt{d(\mu|_{\sigma_i(\K_N)}\circ\sigma^{-1})},
\label{eq7.2.1}
\end{equation}
where $\mu|_A$ indicates the restriction of the measure $\mu$ to the Borel set $A$, i.e., $\mu|_A(B)=\mu(A\cap B)$ for all $B\in\B(\K_N)$. 

The associated projection valued map $P$ is given by
\begin{equation}
P(A)(f\sqrt{d\mu})=\chi_A f\sqrt{d\mu}.
\label{eq7.2.2}
\end{equation}
  \end{proposition}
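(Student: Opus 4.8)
The plan is to establish, in order: that each $S_i$ is a well-defined isometry on $\H(\K_N)$; the adjoint formula \eqref{eq7.2.1}; the Cuntz relations; and finally the projection valued measure formula \eqref{eq7.2.2}. Every computation rests on the two elementary identities $\sigma\circ\sigma_i=\mathrm{id}$ on all of $\K_N$ and $\sigma_i\circ\sigma=\mathrm{id}$ on the cylinder $\C(i)=\sigma_i(\K_N)$, together with the fact that the pushforward $\mu\circ\sigma_i^{-1}$ is supported on $\C(i)$. First I would check that $S_i$ respects the equivalence relation defining $\sigma$-functions and is isometric. Since $\|f\sqrt{d\mu}\|^2=\int|f|^2\,d\mu$, one computes
$$\|S_i(f\sqrt{d\mu})\|^2=\int|f\circ\sigma|^2\,d(\mu\circ\sigma_i^{-1})=\int|f\circ\sigma\circ\sigma_i|^2\,d\mu=\int|f|^2\,d\mu,$$
using $\sigma\circ\sigma_i=\mathrm{id}$; well-definedness is the same computation applied to the defining relation $f\sqrt{d\mu/d\lambda}=g\sqrt{d\nu/d\lambda}$.

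The heart of the argument is the adjoint formula, and this is the step I expect to be the main obstacle because of the Radon--Nikodym bookkeeping. I would verify that the operator $T_i(g\sqrt{d\nu}):=(g\circ\sigma_i)\sqrt{d((\nu|_{\C(i)})\circ\sigma^{-1})}$ satisfies $\ip{S_i(f\sqrt{d\mu})}{g\sqrt{d\nu}}=\ip{f\sqrt{d\mu}}{T_i(g\sqrt{d\nu})}$. Choosing a common dominating measure $\lambda$ for $\mu\circ\sigma_i^{-1}$ and $\nu$, the left inner product reduces to an integral over $\C(i)$, where $d(\mu\circ\sigma_i^{-1})/d\lambda$ is supported. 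The key move is to change variables through the measurable bijection $\sigma\colon\C(i)\to\K_N$, whose inverse is $\sigma_i$: under this substitution $f\circ\sigma$ becomes $f$, $g$ becomes $g\circ\sigma_i$, the reference measure $\lambda|_{\C(i)}$ becomes $\tilde\lambda:=(\lambda|_{\C(i)})\circ\sigma^{-1}$, and each square-root factor transforms by the bijection rule $\frac{d\alpha}{d\lambda}(\sigma_i(y))=\frac{d(\alpha\circ\sigma^{-1})}{d\tilde\lambda}(y)$. Since $(\mu\circ\sigma_i^{-1})\circ\sigma^{-1}=\mu$, the transformed integral is exactly $\ip{f\sqrt{d\mu}}{T_i(g\sqrt{d\nu})}$, yielding \eqref{eq7.2.1}.

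With the adjoint in hand, the Cuntz relations are formal. For $S_i^*S_j$ I would apply \eqref{eq7.2.1} to $S_j(f\sqrt{d\mu})=(f\circ\sigma)\sqrt{d(\mu\circ\sigma_j^{-1})}$; since $\mu\circ\sigma_j^{-1}$ lives on $\C(j)$, restricting to $\C(i)$ annihilates it when $i\neq j$, while for $i=j$ the identities $(f\circ\sigma)\circ\sigma_i=f$ and $(\mu\circ\sigma_i^{-1})\circ\sigma^{-1}=\mu$ give $S_i^*S_i=I$; hence $S_i^*S_j=\delta_{ij}I$. For the sum, a direct computation using $\sigma_i\circ\sigma=\mathrm{id}$ on $\C(i)$ gives $S_iS_i^*(f\sqrt{d\mu})=\chi_{\C(i)}f\sqrt{d\mu}$, where one uses that $f\sqrt{d(\mu|_{\C(i)})}=\chi_{\C(i)}f\sqrt{d\mu}$ because $d(\mu|_{\C(i)})/d\mu=\chi_{\C(i)}$; summing over $i\in\bz_N$ gives $I$ since the cylinders $\C(i)$ partition $\K_N$.

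Finally, the same computation, iterated, gives $S_IS_I^*(f\sqrt{d\mu})=\chi_{\C(I)}f\sqrt{d\mu}$ for every finite word $I$, so that the defining relation $P(\C(I))=S_IS_I^*$ makes $P$ equal to multiplication by $\chi_{\C(I)}$ on each cylinder. Introducing the assignment $A\mapsto M_{\chi_A}$ with $M_{\chi_A}(f\sqrt{d\mu})=\chi_Af\sqrt{d\mu}$, one checks immediately that $A\mapsto M_{\chi_A}$ is a projection valued measure on $\B(\K_N)$ agreeing with $P$ on all cylinders; by the uniqueness of the Kolmogorov extension used to define $P$, this forces $P(A)=M_{\chi_A}$ for all Borel $A$, which is \eqref{eq7.2.2}.
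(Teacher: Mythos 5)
Your proposal is correct and follows essentially the same route as the paper's proof: well-definedness and isometry via $\sigma\circ\sigma_i=\mathrm{id}$, the adjoint formula by restricting to $\C(i)=\sigma_i(\K_N)$ and changing variables through the bijection $\sigma\colon\C(i)\to\K_N$ with the Radon--Nikodym transformation rule, and then identification of $P$ with multiplication by characteristic functions by checking agreement on cylinder sets and extending. The only cosmetic differences are that you write out the Cuntz relations (which the paper leaves as "easy to check") and verify \eqref{eq7.2.2} by iterating $S_IS_I^*=M_{\chi_{\C(I)}}$ directly rather than via the covariance relation $P'(\sigma_i(A))=S_iP'(A)S_i^*$, but both reduce to the same cylinder-set argument.
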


  \begin{proof}
  First we have to check that $S_i$ is well defined. But if $\lambda$ implements the equivalence of $(f,\mu)$ and $(g,\nu)$ then $\lambda\circ \sigma_i^{-1}$ implements the equivalence between $(f\circ\sigma, \mu\circ\sigma_i^{-1})$ and $(g\circ\sigma,\nu\circ\sigma_i^{-1})$; use also the fact that
  $$\frac{d\mu\circ\sigma_i^{-1}}{d\lambda\circ\sigma_i^{-1}}=\frac{d\mu}{d\lambda}\circ\sigma.$$ The linearity and isometry property are also easily checked.
 
 Next, we derive \eqref{eq7.2.1}. Note that $\sigma\circ\sigma_i=1_{\K_N}$ and $\sigma_i\circ\sigma|_{\sigma_i(\K_N)}=1_{\sigma_i(\K_N)}$. We have
 $$\ip{S_i(f\sqrt{d\mu})}{g\sqrt{d\nu}}=\int \cj f\circ\sigma\sqrt{\frac{d(\mu\circ\sigma_i^{-1})}{d\lambda}}g\sqrt{\frac{d\nu}{d\mu}}\,d\lambda$$
 where $\mu\circ\sigma_i^{-1}\ll\lambda$, $\nu\ll\lambda$.

 Note that $\mu\circ\sigma_i^{-1}$ is supported on $\sigma_i(\K_N)$. So $\frac{d(\mu\circ\sigma_i^{-1})}{d\lambda}$ is supported on $\sigma_i(\K_N)$, therefore we can restrict all functions in the previous relation to $\sigma_i(\K_N)$ and we further have
 $$=\int \cj f\circ\sigma\sqrt{\frac{d(\mu\circ\sigma_i^{-1})}{d\lambda|_{\sigma_i(\K_N)}}}g|_{\sigma_i(\K_N)}\sqrt{\frac{d\nu|_{\sigma_i(\K_N)}}{d\lambda|_{\sigma_i(\K_N)}}}d\lambda
 $$$$=\int(\cj f\circ\sigma)(g\circ\sigma_i\circ\sigma)|_{\sigma_i(\K_N)}\sqrt{\left(\frac{d(\mu\circ\sigma_i^{-1})}{d(\lambda\circ\sigma^{-1}\circ\sigma_i^{-1})}\right)\left(\frac{d(\nu|_{\sigma_i(\K_N)}\circ\sigma^{-1}\circ\sigma_i^{-1})}{d(\lambda\circ\sigma^{-1}\circ\sigma_i^{-1})}\right)}d\lambda
 $$
 $$
 =\int (\cj f\circ\sigma) g\circ\sigma_i\circ\sigma\sqrt{\left(\frac{d\mu}{d(\lambda\circ\sigma^{-1})}\circ\sigma\right)\left(\frac{d(\nu|_{\sigma_i(\K_N)}\circ\sigma^{-1})}{d(\lambda\circ\sigma^{-1})}\circ\sigma\right)}d\lambda
 $$
 $$
 =\int \cj f\, g\circ\sigma_i\sqrt{\left(\frac{d\mu}{d(\lambda\circ\sigma^{-1})}\right)\left(\frac{d(\nu|_{\sigma_i(\K_N)}\circ\sigma^{-1})}{d(\lambda\circ\sigma^{-1})}\right)}d(\lambda\circ\sigma^{-1}).
 $$
 Since $\mu\circ\sigma_i^{-1}\ll\lambda$, we have $\mu\ll\lambda\circ\sigma^{-1}$. Also $\nu|_{\sigma_i(\K_N)}\circ\sigma^{-1}\ll\lambda\circ\sigma^{-1}$. Then
 $$=\ip{f\sqrt{d\mu}}{g\circ\sigma_i\sqrt{d(\nu|_{\sigma_i(\K_N)}\circ\sigma^{-1})}}.$$
 This proves \eqref{eq7.2.1} and with this the Cuntz relations are easy to check.

 For \eqref{eq7.2.2}, we can check that the operator $P'(A)$ defined by the right-hand side of \eqref{eq7.2.2} satisfies
 \begin{equation}
P'(\sigma_i(A))=S_iP'(A)S_i^*,
\label{eq7.2.3}
\end{equation}
for all Borel subsets $A$ of $\K$ and all $i\in\bz_N$.
But $$S_iP'(A)S_i^*(f\sqrt{d\mu})=(\chi_A\circ \sigma) (f\circ\sigma_i\circ\sigma)\sqrt{d(\mu|_{\sigma_i(\K_N)}\circ\sigma^{-1}\circ\sigma_i^{-1})}$$
$$=
\chi_{\sigma_i(\K_N)}(\chi_A\circ \sigma) (f\circ\sigma_i\circ\sigma)\sqrt{d\mu}
=\chi_{\sigma_i(A)}f\sqrt{d\mu}=P'(\sigma_i(A))(f\sqrt{d\mu})$$
Since $P'(\K_N)=I$, \eqref{eq7.2.3}  implies that $P=P'$ on every cylinder, and hence on every Borel set. 
 
  \end{proof}

\begin{theorem}\label{th2.2}
An operator $T$ on $\H(\K_N)$ commutes with the representation  $\pi_{universal}(\mathfrak A_N)$ if and only if for each measure $\mu\in \mathcal M(\K_N)$ there exist a function $F_\mu$ in $L^\infty(\mu)$ with the following properties:

\begin{enumerate}
	\item $\sup_\mu\|F_\mu\|_{L^\infty(\mu)}<\infty$. 
	\item If $\mu\ll\lambda$ then $F_\mu=F_\lambda$, $\mu$-a.e.
	\item $T(f\sqrt{d\mu})=F_\mu f\sqrt{d\mu}$ for all $f\sqrt{d\mu}\in \H(\K_N)$. 
\end{enumerate}

Moreover $T$ commutes with $\pi_{universal}(\O_N)$ if and only if for every $\mu\in\M(\K_N)$
\begin{equation}
F_\mu=F_{\mu\circ\sigma_i^{-1}}\circ\sigma_i,\quad\mu\mbox{-a.e.}\quad(i\in\bz_N).
\label{eq2.2.1}
\end{equation}

\end{theorem}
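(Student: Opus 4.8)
The plan is to analyze $T$ one measure at a time, exploiting the fiber structure of $\H(\K_N)$. For a fixed $\mu\in\M(\K_N)$ the map $f\mapsto f\sqrt{d\mu}$ identifies $L^2(\mu)$ isometrically with the subspace $\H_\mu:=\{f\sqrt{d\mu}:f\in L^2(\mu)\}$, and by \eqref{eq7.2.2} the representation $\pi_{universal}(\mathfrak A_N)$ acts on $\H_\mu$ as multiplication by continuous functions. Since $\K_N$ is compact metrizable and $\mu$ is regular, $C(\K_N)$ is dense in $L^2(\mu)$ and weakly dense in $L^\infty(\mu)$, so this restricted representation is maximal abelian: its commutant inside $B(\H_\mu)$ is exactly $\{M_F:F\in L^\infty(\mu)\}$. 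The theorem then amounts to gluing these fiberwise pictures together consistently.

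For the forward direction of the first assertion, the main step is to show that $T$ preserves each fiber. Because the commutant is a von Neumann algebra, $T$ commutes not only with $\pi_{universal}(g)$ for $g\in C(\K_N)$ but with every spectral projection $P(A)$, $A\in\B(\K_N)$. I would let $\tau$ be the measure $A\mapsto\|P(A)T\sqrt{d\mu}\|^2$ associated to the $\sigma$-function $T\sqrt{d\mu}$; if $\mu(A)=0$ then $P(A)\sqrt{d\mu}=\chi_A\sqrt{d\mu}=0$, hence $P(A)T\sqrt{d\mu}=TP(A)\sqrt{d\mu}=0$ and $\tau(A)=0$, so $\tau\ll\mu$ and therefore $T\sqrt{d\mu}=F_\mu\sqrt{d\mu}$ for a unique $F_\mu\in L^2(\mu)$. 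Commuting with $\pi_{universal}(g)$ then gives $T(g\sqrt{d\mu})=F_\mu g\sqrt{d\mu}$ for $g\in C(\K_N)$, and the estimate $\int|F_\mu|^2|g|^2\,d\mu=\|T(g\sqrt{d\mu})\|^2\le\|T\|^2\int|g|^2\,d\mu$ forces $\|F_\mu\|_{L^\infty(\mu)}\le\|T\|$, which is property (1) and lets the identity $T|_{\H_\mu}=M_{F_\mu}$ extend to all of $L^2(\mu)$, i.e. property (3). For the consistency relation (2), when $\mu\ll\lambda$ the single $\sigma$-function $\sqrt{d\mu}=\sqrt{d\mu/d\lambda}\,\sqrt{d\lambda}$ also lies in $\H_\lambda$; applying (3) for $\lambda$ and for $\mu$ to it yields $F_\mu\sqrt{d\mu/d\lambda}=F_\lambda\sqrt{d\mu/d\lambda}$ in $L^2(\lambda)$, and since $d\mu/d\lambda>0$ holds $\mu$-a.e. this gives $F_\mu=F_\lambda$ $\mu$-a.e.

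The converse of the first assertion is routine: given $(F_\mu)$ satisfying (1)--(2), define $T$ by (3). Well-definedness reduces, via a common dominating $\lambda$, to the identity $F_\mu\sqrt{d\mu/d\lambda}=F_\lambda\sqrt{d\mu/d\lambda}$ supplied by (2); the uniform bound (1) gives $\|T\|\le\sup_\mu\|F_\mu\|_{L^\infty(\mu)}$; and $T$ commutes with $\pi_{universal}(g)$ because multiplication by $g$ and by $F_\mu$ commute on each fiber. For the ``moreover'' part, I would first observe that commuting with all $S_i$ already forces commuting with every $S_i^*$: using $\sum_j S_jS_j^*=I$ and $S_i^*S_j=\delta_{ij}I$ one gets $S_i^*T=\sum_j S_i^*TS_jS_j^*=\sum_j S_i^*S_jTS_j^*=TS_i^*$, so $T\in\pi_{universal}(\O_N)'$ iff $T\in\pi_{universal}(\mathfrak A_N)'$ and $TS_i=S_iT$ for all $i$. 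Substituting \eqref{eq7.1} into both $TS_i(f\sqrt{d\mu})$ and $S_iT(f\sqrt{d\mu})$ and using (3) at $\mu$ and at $\mu\circ\sigma_i^{-1}$ gives $F_{\mu\circ\sigma_i^{-1}}(f\circ\sigma)=(F_\mu\circ\sigma)(f\circ\sigma)$ in $L^2(\mu\circ\sigma_i^{-1})$ for all $f$; taking $f\equiv1$, this is equivalent to $F_{\mu\circ\sigma_i^{-1}}=F_\mu\circ\sigma$ holding $(\mu\circ\sigma_i^{-1})$-a.e. on $\sigma_i(\K_N)$. Precomposing with $\sigma_i$ and using $\sigma\circ\sigma_i=1_{\K_N}$ converts this into the stated equation \eqref{eq2.2.1}.

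I expect the main obstacle to be the fiber-preservation step together with the consistency condition (2): the bookkeeping with equivalence classes of pairs $(f,\mu)$ and their Radon--Nikodym reparametrizations is where care is genuinely needed, particularly in tracking where the densities $d\mu/d\lambda$ vanish so that $\mu$-a.e.\ statements are not confused with $\lambda$-a.e.\ ones. Everything else is either a one-line Cuntz-relation manipulation or a standard density and maximal-abelian argument.
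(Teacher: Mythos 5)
Your proposal is correct and follows essentially the same route as the paper's proof: commutation with the projection-valued measure forces $\m_{Tx}\ll\m_x$, hence $T$ acts on each fiber $\L^2(\mu)$ as multiplication by some $F_\mu\in L^\infty(\mu)$ with $\|F_\mu\|_\infty\le\|T\|$, the consistency property (ii) and the converse are obtained by exactly the Radon--Nikodym reparametrization you describe, and the ``moreover'' part reduces to the same computation with $TS_i=S_iT$ and $f\equiv1$. The only notable local difference is your algebraic derivation of $TS_i^*=S_i^*T$ from $TS_i=S_iT$ via $S_i^*T=\sum_j S_i^*S_jTS_j^*=TS_i^*$, which is a clean and arguably preferable substitute for the paper's appeal to normality of $T$ plus the Fuglede--Putnam theorem.
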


\begin{proof}
Suppose $T$ is an operator that commutes with the representation of $\mathfrak A_N$. Then $T$ commutes with the projection valued measure $P$. 
\begin{lemma}\label{lem2.3}
If $T$ commutes with the representation of $\mathfrak A_N$, then for every $x\in\H(\K_M)$, $\m_{Tx}\ll \m_x$.
\end{lemma}

\begin{proof}
We have for each Borel set $A$:
$$\m_{Tx}(A)=\ip{Tx}{P(A)Tx}=\ip{T^*Tx}{P(A)x}\leq \|T^*Tx\|\|P(A)x\|^2=\|T^*Tx\|\m_x(A).$$
This implies that $\m_{Tx}\ll\m_x$.
\end{proof}

\begin{lemma}\label{lem2.3.1}
For every $f\sqrt{d\mu}$ in $\H(\K_N)$
\begin{equation}
d\m_{f\sqrt{d\mu}}=|f|^2d\mu
\label{eq2.3.1.1}
\end{equation}
\end{lemma}

\begin{proof}
Using Proposition \ref{pr7.2}, we have
$$\m_{f\sqrt{d\mu}}(A)=\ip{f\sqrt{d\mu}}{P(A)(f\sqrt{d\mu})}=\ip{f\sqrt{d\mu}}{\chi_Af\sqrt{d\mu}}=\int\chi_A|f|^2\,d\mu. $$
This proves the lemma. 
\end{proof}

\begin{lemma}\label{lem2.4}\cite{Nel69}.
For every finite Borel measure $\mu$ on $\K_N$, define the operator $W_\mu$ from $L^2(\mu)$ to $\H(\K_N)$, 
\begin{equation}
W_\mu(f)=f\sqrt{d\mu}.
\label{eq2.4.1}
\end{equation}
Then $W_\mu$ is an isometry into a subspace of $\H(\K_N)$ which we denote by $\L^2(\mu)$.
For any bounded measurable function $f$ on $\K_N$ define the multiplication operator $M_fg=fg$, $g\in L^2(\mu)$. Then 
\begin{equation}
W_\mu M_f=\pi(f)W_\mu
\label{eq2.4.2}
\end{equation}
\end{lemma}

\begin{proof}
The proof requires just a simple verification; the details can be also found in \cite{Nel69}.

\end{proof}

\begin{lemma}\label{lem2.5}
If $T$ commutes with the representation  $\pi_{universal}(\mathfrak A_N)$, then $T$ maps $\L^2(\mu)$ into itself, for every $\mu\in\mathcal M(\K_N)$. 

\end{lemma}

\begin{proof}
Let $x=f\sqrt{d\mu}$ be in $\L^2(\mu)$ so $f\in L^2(\mu)$. Let $Tx=g\sqrt{d\nu}$. We have that $\m_{Tx}\ll\m_x$, by Lemma \ref{lem2.3}. But $d\m_{Tx}=|g|^2\,d\nu$ and $d\m_x=|f|^2\,d\mu$, by Lemma \ref{lem2.3.1}. Therefore $|g|^2\,d\nu\ll\mu $ so, by the Radon-Nikodym theorem there exists $h\geq0$ in $L^1(\mu)$ such that $|g|^2\,d\nu=h\,d\mu$. Then $g\sqrt{d\nu}=\frac{g}{|g|}\sqrt{h}d\mu\in \L^2(\mu)$. 

\end{proof}

We return to the proof of the theorem. If $T$ commutes with $\pi_{universal}(\mathfrak A_N)$ then for every $\mu$, $T$ maps $\L^2(\mu)$ into itself and $T$ commutes with $\pi(f)$ for all bounded measurable functions $f$. Using Lemma \ref{lem2.4}, we pull-pack everything to $L^2(\mu)$ and we obtain an operator that commutes with all multiplication operators and therefore, it must be a multiplication operator too. So there exists a function $F_\mu$ in $L^\infty(\mu)$ such that $T(f\sqrt{d\mu})=F_\mu f\sqrt{d\mu}$, for all $f\in L^2(\mu)$. 

It remains to check the properties of the functions $F_\mu$. We have that $\|F_\mu\|_{L^\infty(\mu)}\leq \|T\|$ and this implies (i). 

If $\mu\ll\lambda$ then, for all $f\in L^2(\mu)$, $f\sqrt{d\mu}=f\sqrt{\frac{d\mu}{d\lambda}}\sqrt{d\lambda}$ so applying $T$ we have $F_\mu f\sqrt{d\mu}=F_\lambda f\sqrt{\frac{d\mu}{d\lambda}}\sqrt{d\lambda}$ which means that 
$F_\mu f\sqrt{\frac{d\mu}{d\lambda}}=F_\lambda f\sqrt{\frac{d\mu}{d\lambda}}$, $\lambda$-a.e. This implies that $F_\mu\sqrt{\frac{d\mu}{d\lambda}}=F_\lambda\sqrt{\frac{d\mu}{d\lambda}}$, $\lambda$-a.e.. This implies further that 
$F_\mu {\frac{d\mu}{d\lambda}}=F_\lambda{\frac{d\mu}{d\lambda}}$, $\lambda$-a.e.. Integrating with respect to $\lambda$, against the characteristic function of any Borel set, we obtain that $F_\mu=F_\lambda$, $\mu$-a.e.. This proves (ii). (iii) is already proved.

For the converse, assume $T$ is given by the functions $F_\mu$ satisfying (i)--(iii). First, we have to check that $T$ is well defined. So take $f\sqrt{d\mu}=g\sqrt{d\nu}$ and let $\lambda$ be a measure such that $\mu,\nu\ll\lambda$. Then $f\sqrt{\frac{d\mu}{d\lambda}}=g\sqrt{\frac{d\nu}{d\lambda}}$, $\lambda$-a.e. 
We have, by (ii), $F_\mu=F_\lambda$, $\mu$-a.e. so $F_\mu{\frac{d\mu}{d\lambda}}=F_\lambda{\frac{d\mu}{d\lambda}}$, $\lambda$-a.e. and therefore $F_\mu\sqrt{\frac{d\mu}{d\lambda}}=F_\lambda\sqrt{\frac{d\mu}{d\lambda}}$, $\lambda$-a.e.. Similarly, $F_\nu\sqrt{\frac{d\nu}{d\lambda}}=F_\lambda\sqrt{\frac{d\nu}{d\lambda}}$, $\lambda$-a.e.. These relations imply that 
$fF_\mu\sqrt{\frac{d\mu}{d\lambda}}=gF_\nu\sqrt{\frac{d\nu}{d\lambda}}$, $\lambda$-a.e., which means that $F_\mu f\sqrt{d\mu}=F_\nu g\sqrt{d\nu}$ and that $T$ is well defined.

(i) implies that $T$ is bounded with $\|T\|\leq\sup_\mu \|F_\mu\|_{L^\infty(\mu)}$. Also, Proposition \ref{pr7.2}, implies that $T$ commutes with $P(A)$ for all Borel subsets $A$ and therefore $T$ commutes with $\pi_{universal}(\mathfrak A_N)$.

An operator $T$ as above commutes with $\pi_{universal}(\O_N)$ iff $T$ commutes with all $S_i$, $i\in\bz_N$ (this follows from the fact that $T$ is normal and by the Fuglede-Putnam theorem it will commute also with $S_i^*$).
This means that 
$TS_i=S_iT$, i.e., for all $f\sqrt{d\mu}$ in $\H(\K_N)$,
$$F_{\mu\circ\sigma_i^{-1}}(f\circ\sigma)\sqrt{d\mu\circ\sigma_i^{-1}}=(F_\mu\circ\sigma)(f\circ\sigma)\sqrt{d\mu\circ\sigma_i^{-1}}.$$
This is equivalent to
$$F_{\mu\circ\sigma_i^{-1}}(f\circ\sigma)=(F_\mu\circ\sigma)(f\circ\sigma),\quad\mu\circ\sigma_i^{-1}\mbox{-a.e.},$$
or
$$F_{\mu\circ\sigma_i^{-1}}=F_\mu\circ\sigma,\quad\mu\circ\sigma_i^{-1}\mbox{-a.e.}.$$
Composing with $\sigma_i$ we get further the equivalence with \eqref{eq2.2.1}.

\end{proof}

\begin{proposition}\label{pr3.6}
Let $(\mu,(f_i)_{i\in\bz_N})$ be a nonnegative monic system. Let $(S_i)_{i\in\bz_N}$ be the associated monic representation of $\O_N$. Then the map $W$ from $L^2(\mu)$ to $\H(\K_N)$ given by $Wf=f\sqrt{d\mu}$, defines an isometric embedding which intertwines the representation with the universal representation of $\O_N$, which we denote here by $(S_i^u)_{i\in\bz_N}$. 

\end{proposition}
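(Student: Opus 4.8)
The plan is to recognize that the map $W$ is nothing but the isometry $W_\mu$ of Lemma \ref{lem2.4}, so that the isometric embedding part is already established, and then to verify by direct computation that $W$ intertwines $S_i$ with the universal isometry $S_i^u$. Concretely, since $Wf = f\sqrt{d\mu} = W_\mu(f)$, Lemma \ref{lem2.4} immediately gives that $W$ is an isometry from $L^2(\mu)$ onto the subspace $\L^2(\mu)\subseteq\H(\K_N)$; in particular it is injective, hence an embedding. The only remaining content is the relation $WS_i = S_i^u W$ for each $i\in\bz_N$.

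To prove the intertwining, I would compute both sides on an arbitrary $f\in L^2(\mu)$. Using the formula \eqref{eq2.5.3} for the monic representation, the left-hand side is
$$WS_if = (S_if)\sqrt{d\mu} = f_i\,(f\circ\sigma)\sqrt{d\mu}.$$
Using the definition \eqref{eq7.1} of the universal isometry, the right-hand side is
$$S_i^uWf = S_i^u\bigl(f\sqrt{d\mu}\bigr) = (f\circ\sigma)\sqrt{d(\mu\circ\sigma_i^{-1})}.$$
Thus the proposition reduces to showing that the two $\sigma$-functions $f_i(f\circ\sigma)\sqrt{d\mu}$ and $(f\circ\sigma)\sqrt{d(\mu\circ\sigma_i^{-1})}$ coincide in $\H(\K_N)$.

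For this I would invoke the equivalence relation defining $\sigma$-functions, choosing the dominating measure $\lambda=\mu$; this is legitimate because $\mu\circ\sigma_i^{-1}\ll\mu$ by \eqref{eq2.5.1}. With this choice, equality of the two classes amounts to the $\mu$-a.e.\ identity
$$f_i\,(f\circ\sigma)\cdot\sqrt{\tfrac{d\mu}{d\mu}} = (f\circ\sigma)\cdot\sqrt{\tfrac{d(\mu\circ\sigma_i^{-1})}{d\mu}},$$
i.e.\ $f_i\,(f\circ\sigma) = (f\circ\sigma)\,|f_i|$, since $\tfrac{d(\mu\circ\sigma_i^{-1})}{d\mu}=|f_i|^2$ by \eqref{eq2.5.1}.

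The one delicate point — and the only place where the hypothesis enters — is precisely here: the square root of the Radon--Nikodym derivative recovers $|f_i|$, whereas the left-hand side carries $f_i$ itself, so a priori the two $\sigma$-functions could differ by the argument (phase) of $f_i$. Because the monic system is \emph{nonnegative}, we have $f_i\geq 0$ and hence $f_i=|f_i|$, so the required identity holds trivially and $WS_i=S_i^uW$ follows. This phase issue is the substance of the argument; everything else is formal, and it is exactly what would obstruct the same statement for a general (non-nonnegative) monic system.
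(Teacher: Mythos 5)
Your proposal is correct and follows essentially the same route as the paper: the isometry is delegated to Lemma \ref{lem2.4}, and the intertwining reduces to the identity $f_i\sqrt{d\mu}=\sqrt{d(\mu\circ\sigma_i^{-1})}$ as $\sigma$-functions, which the paper carries out via the chain $f_i(f\circ\sigma)\sqrt{d\mu}=(f\circ\sigma)\sqrt{|f_i|^2\,d\mu}=(f\circ\sigma)\sqrt{d(\mu\circ\sigma_i^{-1})}$. Your version merely makes explicit, by unwinding the equivalence relation with dominating measure $\lambda=\mu$, the point where nonnegativity $f_i=|f_i|$ is used — a step the paper performs implicitly in absorbing $f_i$ under the square root.
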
  
  
  \begin{proof}
  Lemma \ref{lem2.4} shows that $W$ is isometric. We just have to check that it is intertwining. We have, for $f\in L^2(\mu)$, $i\in\bz_N$:
  $$WS_if=W(f_i(f\circ\sigma))=f_i(f\circ\sigma)\sqrt{d\mu}=(f\circ\sigma)\sqrt{|f_i|^2d\mu}$$$$=(f\circ\sigma)\sqrt{d(\mu\circ\sigma_i^{-1})}=S_i^{univ}(f\sqrt{d\mu})=S_i^{univ}Wf.$$
  
  \end{proof}
  \begin{acknowledgements}
This work was partially supported by a grant from the Simons Foundation (\#228539 to Dorin Dutkay). We thank Sergii Bezuglyi  for conversations about ergodic theory. One of the authors has had very helpful conversations with Prof Sergii Bezuglyi about Markov measures.
\end{acknowledgements}

\bibliographystyle{alpha}	
\bibliography{eframes}

\end{document}